\documentclass{amsart}
\usepackage{amsmath,amsthm,amssymb,amsfonts}
\usepackage[noadjust]{cite}
\usepackage{tikz}
\oddsidemargin = 9pt \evensidemargin = 9pt \textwidth = 440pt
\allowdisplaybreaks
\usepackage[colorlinks=true, allcolors=blue]{hyperref}


\usepackage[english]{babel}
\usepackage{url}
\usepackage{cite}

\setcounter{MaxMatrixCols}{30}%
\topmargin=0in
\oddsidemargin=0in
\evensidemargin=0in
\textwidth=6.5in
\textheight=8.5in

\providecommand{\U}[1]{\protect\rule{.1in}{.1in}}
\newtheorem{theorem}{Theorem}

\newtheorem{lemma}[theorem]{Lemma}

\newtheorem{proposition}[theorem]{Proposition}

\newcommand{\red}[1]{{\color{black} #1}}

\newcommand{\B}{\mathbb{B}}

\newcommand{\E}{\mathbb{E}}

\newcommand{\G}{\mathbb{G}}
\newcommand{\bS}{\mathbb{S}}

\newcommand{\M}{\mathbb{M}}
\newcommand{\N}{\mathbb{N}}

\renewcommand{\P}{\mathbb{P}}

\newcommand{\R}{\mathbb{R}}

\newcommand{\Z}{\mathbb{Z}}

\newcommand{\wt}{\widetilde}
\newcommand{\ovl}{\overline}
\newcommand{\ep}{\epsilon}

\newcommand{\cN}{\mathcal{N}}

\newcommand{\cX}{\mathcal{X}}

{}

{}

\numberwithin{equation}{section}
\numberwithin{theorem}{section}

\newtheorem{thm}{Theorem}[section]

\theoremstyle{definition}

\newtheorem{rem}[thm]{Remark}

\topmargin=0in
\oddsidemargin=0in
\evensidemargin=0in
\textwidth=6.5in
\textheight=8.5in

\begin{document}

	\title[Synchronization in RGG on the sphere]{Phase Synchronization in Random Geometric Graphs on the 2D Sphere}
	
	\author[C. De Vita]{Cecilia De Vita}
	\address{Departamento de Matem\'atica\hfill\break \indent Facultad de Ciencias Exactas y Naturales\hfill\break \indent Universidad de Buenos Aires\hfill\break \indent IMAS-UBA-CONICET\hfill\break \indent Buenos Aires, Argentina}
	\email{cdevita@dm.uba.ar}
	
	\author[P. Groisman]{Pablo Groisman}
	\address{Departamento de Matem\'atica\hfill\break \indent Facultad de Ciencias Exactas y Naturales\hfill\break \indent Universidad de Buenos Aires\hfill\break \indent IMAS-UBA-CONICET\hfill\break \indent Buenos Aires, Argentina and\hfill\break \indent \hfill\break \indent  NYU-ECNU Institute of Mathematical Sciences\hfill\break \indent  at NYU Shanghai}
	\email{pgroisma@dm.uba.ar}
	
	\author[R. Huang]{Ruojun Huang}
	\address{Fachbereich Mathematik und Informatik\hfill\break \indent Universit\"at M\"unster, \hfill\break \indent Einsteinstr. 62, M\"unster 48149, Germany}
	\email{ruojun.huang@uni-muenster.de}
	
	
	\keywords{interacting dynamical systems; Kuramoto model; random geometric graphs; synchronization; pinwheel solutions}
        \subjclass{34C15, 90C26, 05C80, 34D06}
	 \date{\today}
	
	\begin{abstract}
    The Kuramoto model is a classical nonlinear ODE system designed to study synchronization phenomena. Each equation represents the phase of an oscillator and the coupling between them is determined by a graph. There is an increasing interest in understanding the relation between the graph topology and the spontaneous synchronization of the oscillators. Abdalla, Bandeira and Invernizzi \cite{abdalla2024guarantees} considered random geometric graphs on the $d$-dimensional sphere and proved that the system synchronizes with high probability as long as the mean number of neighbors and the dimension $d$ go to infinity. They posed the question about the behavior when $d$ is small. In this paper, we prove that synchronization holds for random geometric graphs on the two-dimensional sphere, with high probability as the number of nodes goes to infinity, as long as the initial conditions converge to a smooth function. \red{We conjecture a similar behavior for more general simply-connected closed Riemannian manifolds but we expect global synchronization to fail if the manifold is not simply-connected, as was shown in \cite{devita2025energy} and suggested in \cite{cirelli2024scaling}.}
    \end{abstract}

	\maketitle
	
	
	\section{Introduction}
	The Kuramoto model is a prototypical example to study synchronization phenomena that occur widely in science and technology \cite{mirollo1990synchronization, winfree1967biological, acebron2005kuramoto, bullo2020lectures, arenas2008synchronization, dorflerSurvey, strogatz2004sync, strogatz2000kuramoto}. Originally it was defined as a system of ordinary differential equations (ODE) with mean field coupling \cite{kuramoto1975self} but later on the relevance of understanding the system for different kinds of graphs became apparent \cite{Abdalla2022, abdalla2024guarantees, devita2025energy, GHV, girvan2002community,  kassabov2021sufficiently, kassabov2022global, taylor2012there}.
	
	Hence, the community considered the behavior of this system in circulant graphs \cite{wsg}, graphons \cite{Medvedev2014, MedvedevWgraphs, medvedev2018continuum}, small-world networks \cite{MedvedevSmallWorld}, strongly connected graphs \cite{kassabov2021sufficiently, kassabov2022global, taylor2012there}, Erd\H{o}s-R\'enyi graphs \cite{ling2019landscape, Abdalla2022, nagpal2024}, Random Geometric Graphs (RGG) in the Torus \cite{cirelli2024scaling} and in the $d$-dimensional sphere \cite{abdalla2024guarantees} among others.
	
    For a given (finite, possibly weighted) graph $\G=(V,\mathcal E)$ with adjacency matrix $A=(a_{ij})_{1\le i,j\le n}$, the Kuramoto model on $\mathbb G$ is the following system of ODEs
    	\begin{align*}
        \begin{cases}
       	\displaystyle{\frac{d}{dt}}u_i(t) = \omega_i + \sum_{j=1}^n a_{ij}\sin\left(u_j(t)-u_i(t)\right),  \red{\quad t>0,}\\[10pt]
			u_i(0)=u^0_i, \quad \quad i=1,2,...,n.
		\end{cases}
	\end{align*}
The parameters $\omega_i$ are called {\em natural frequencies}. We are interested in {\em phase synchronization} (stable states with $u_i=u_j$ for all $i,j$), which can occur only if $\omega_i=\omega$ for every $i$. We assume this throughout the manuscript. By changing variables to a moving frame $u_i \to u_i - \omega t$, we obtain
    	\begin{align}
		\begin{cases}\label{eq:kuramoto.general}
			\displaystyle{\frac{d}{dt}}u_i(t) = \sum_{j=1}^n a_{ij}\sin\left(u_j(t)-u_i(t)\right), \red{\quad t>0,} \\[10pt]
			u_i(0)=u^0_i, \quad \quad i=1,2,...,n.
		\end{cases}
	\end{align}
\red{Here, $a_{ij}=a_{ji}\ge 0$ are symmetric non-negative edge weights that measure the strength of connection between every pair of vertices $i,j$, and $a_{ij}>0$ if and only if $i,j$ are neighbors. The initial condition $u^0_i$ takes values in the unit circle $\mathbb S^1=\R/2\pi\Z$, as are $u_i(t)$ for $t>0$.}
    Since the system is invariant under shifts, it is convenient to assume that the initial condition (and hence the solution for every time) verifies $\sum_{i=1}^n u_i^0=0$. This means that the dynamics takes place on the subspace orthogonal to $(1,1, \dots, 1)$. 

    Equation \eqref{eq:kuramoto.general} admits a potential given by
    \begin{equation}
    \label{eq:energy}
    \mathsf E(u_1, \dots, u_n) = \frac12 \sum_{i,j=1}^n a_{ij}\big[1 - \cos(u_j-u_i)\big].
    \end{equation}
    It is straightforward to check that $\dot u_i = -\partial \mathsf E/\partial u_i$. For a system like \eqref{eq:kuramoto.general} we say that there is {\em spontaneous synchronization} if the only stable equilibrium of the system is the phase-locked state $u_i=0$ for every $1\le i \le n$. This is equivalent to the energy function $\mathsf E$ having a unique global minimum. In this case, with the exception of a zero-measure set, all the initial conditions converge towards the global minimum.

 Here and in the literature {\em spontaneous synchronization}, {\em global synchronization} and {\em benign landscape} (for $\mathsf E$) are used indistinguishably. The last name comes from the nonlinear optimization community, for which the interest in this problem arises naturally from the possibility of understanding the geometry of a nonconvex functional to be optimized.

 \red{A random geometric graph (RGG) is a graph with vertex set given by a set of random points in some prescribed metric space and edge set formed by some geometric considerations, usually closeness with respect to the metric. Such graphs have a spatial structure, in stark contrast with Erd\H os-R\'enyi random graphs which are formed in a mean-field way. In particular, the spatial structure has been shown to be responsible of some particular behaviors in systems of coupled oscillators. That is the case, for example, of the Kuramoto on cycle graphs that support {\em twisted states} \cite{wsg} and the RGG in the circle. RGGs in the circle have the same topology as the cycle graph but, opposite to them, they are not exactly solvable due to the lack of symmetries that are present in the cycle \cite{devita2025energy}.
 
 In this article, for every $n\in\N$ and $\ep>0$, we consider RGG with vertex set given by $n$ points drawn i.i.d. uniformly on the $2$-sphere $\mathbb S^2$, and edge set formed by connecting every point with any other point within $3$ dimensional Euclidean distance $\sqrt{\ep}$ (here we view $\mathbb S^2\subset\R^3$). We will also put weights on the edges, rendering it a weighted RGG, but we do not go into it now.}
  
Abdalla, Bandeira, and Invernizzi \cite{abdalla2024guarantees} proposed studying the Kuramoto model in RGG on the $d$-sphere. 
They show that spontaneous synchronization occurs with high probability as $n\to \infty$ in at least one of the two regimes (see \cite[Theorems 3 and 4]{abdalla2024guarantees}):
    \begin{align*}
     np\ge  C_1(\log n)^{10}, \quad d \ge C_2(\log n)^3,
    \end{align*}
    or 
     \begin{align*}
     np\ge  C_1(\log n)^{2}, \quad d \ge C_2(n^2p^2+(\log n)^4)(\log n)^4, \quad \frac{c_0}{n}< p< \frac{1}{2},
    \end{align*}
    for any $c_0>0$ and some finite constants $C_i=C_i(c_0)$, $i=1,2$. Here $p$ represents the probability that two independent uniform points in $\bS^d$ are within Euclidean distance $\sqrt{\ep}$ (i.e.~are neighbors). This probability depends on both $\ep$ and $d$, and can be written as $p=\frac{1}{2}I_{\ep(1-\frac{\ep}{4})}(\frac{d}{2},\frac{1}{2})$, where $I_x(a,b)$ is the regularized incomplete beta function (see \cite[pp. 2]{lee2014concise}).

    The techniques developed in the paper \cite{abdalla2024guarantees} -- according to the authors -- do not apply to deal with the case in which $d$ is small and in particular to the case $d=2$, which is of special interest due to its role to model physical space.

    \red{The main feature of their condition is the diverging of dimension $d$ of the sphere with $n$. As explained in \cite{liu2021} (on which \cite{abdalla2024guarantees} heavily relies), the intuition behind this case is that when dimension is sufficiently high with respect to $n$, the random geometric graphs become increasingly indistinguishable from an Erd\H os-R\'enyi random graph. Considering the extreme case that $n$ is fixed and $d\to\infty$ and identifying the $n$ points on $\bS^{d-1}$ with vectors $v_1,..., v_n\in\R^d$, then these $n$ vectors are essentially mutually orthogonal. That is, the inner products $\langle v_i,v_j\rangle$ are essentially mostly close to $0$. Since the criterion for $(i,j)$ to be neighbors in the RGG can be rephrased as $\langle v_i,v_j\rangle$ exceeding a threshold very close to $1$, the fact that $(i,j)$ are neighbors has very little impact on whether $(i,k)$ or $(j,k)$ are neighbors, because they will typically be very close to $0$. Given this, it is not hard to believe that in some regime where $d,n$ both grow with $d=d(n)$, this intuition could still hold true and we can still compare with Erd\H os-R\'enyi graph, and transport what we know about Kuramoto model on these graphs to say things about RGG on high dimensional spheres. }

    In this work, we deal with the case $d=2$ in the regime	
		\begin{align}
        \label{ep-regime}
					\ep=\ep(n)\to 0 \quad \text{as }n\to\infty,\qquad  
\liminf_{n\to\infty}\frac{\ep^2n}{\log n}=\infty.
		\end{align}
    \red{In this case, $p=\pi\ep$ which follows from a simple computation of the area of a spherical cap.
    
    Our regime and the regime of \cite{abdalla2024guarantees} are not comparable, since we are in fixed low dimension, and they are in a large $d$ limit case. Neither result implies the other. Moreover, the reasons that lead to global synchronization seem to be different for $d=2$ than for $d\to \infty$. As mentioned before, as $d\to \infty$ the graph looks similar to an Erd\H os-R\'enyi graph, for which global sync is known. For $d=2$ our graph $\mathbb G_n$ is substantially different. The spatial structure  develops dependencies between points close to each other and the geometry of the sphere (being simply-connected) plays a key role.}

    We prove that synchronization occurs with high probability as $n\to \infty$ if the initial conditions converge to a smooth initial function defined on $\mathbb S^2$. A precise statement is given in the next section. We remark that this does not imply a global synchronization result since we are saying nothing about sequences of initial conditions that do not converge to a smooth function. 

    Our proof is based on a scaling limit that states that, in our regime, solutions to \eqref{eq:kuramoto.general} converge, in compact time intervals, to solutions of the heat equation on the sphere with values in $\mathbb S^1$, if the initial conditions do so. Since this heat equation is globally synchronizing, we can use our scaling limit to ensure that the solution of \eqref{eq:kuramoto.general} visits a neighborhood of the phase-locked state in finite time. The argument concludes with the use of a well-known result \cite{bullo2020lectures} that guarantees that for any connected graph of $n$ vertices, the set $\{\mathbf u\in (\bS^1)^n \colon |u_i -u_j| < \pi/2\}$ is contained in the basin of attraction of the phase-locked state. This strategy has been previously used at least in \cite{GHV,nagpal2024}.

    \red{We add a word on the role of $\ep$ in our strategy of proof. As said above, $\ep$ controls the size of the interaction of each Kuramoto oscillator, equivalently the size of the neighborhood of each random point on the sphere. Since we are interested in the case of localized interactions or sparse graphs, we are thinking of $\ep$ so small that the order of the average vertex degree $\ep n$ does not grow with $n$ too fast (in general, the faster $\ep n$ grows, the easier is analysis, as we approach the mean field). The most extreme case is where $\ep n$ remains order $1$, but that is far from what we can achieve; nevertheless, it is instructive to think about that case: a continuum analogue of nearest-neighbor interaction or something a bit more relaxed than that. In order to connect to a local PDE like the heat equation, it is necessary that $\ep\to 0$ as $n\to\infty$. In this paper, we can cover a regime of $\ep$ so that $\ep\to0$ but not too fast, which is captured in \eqref{ep-regime} (in particular, the average vertex degree diverges faster than $\ep^{-1}\log n$). In most of our proofs, the two parameters $\ep,n$ are treated as two independent parameters, and our bounds are given quantitatively in terms of them. From such quantitative bounds one can see what relation they need to satisfy in order for certain probabilities of bad events to be sufficiently small. }

    Several recent works, starting from \cite{ling2023local} and then extended in \cite{endor2024benign, mcrae2025benign}, give sufficient conditions that guarantee the energy landscape of a given Kuramoto model is benign, by checking an inequality on the condition number of the associated graph Laplacian (\cite[Theorem 2.2]{endor2024benign}) or certain normalized Laplacian matrix (\cite[Theorem 2.1]{mcrae2025benign}). \red{Here, the condition number of a (nonnegative definite) Laplacian matrix is the ratio between its largest eigenvalue and smallest nonzero eigenvalue (cf. \cite[Theorem 2.2]{endor2024benign}).}
    The results are more general, cast in the Burer-Monteiro factorizations of MaxCut-type semidefinite programs, which include Kuramoto model as a special case. However, we note that this sufficient condition, which requires the condition number of the said matrices to be strictly less than $2$, is unlikely to apply to our RGG case; indeed we expect the condition number in our sparse RGG graphs in small space dimensions to diverge to infinity with $n$. See also \cite[Section 3.4.5]{mcrae2025benign} for a very relevant discussion.

   \smallskip
	The paper is organized as follows. In Section \ref{sec:setup} we give precise definitions and state our main results. Section \ref{sec:integral.equation} deals with an integral equation that we use to approximate both \eqref{eq:kuramoto} and the heat equation \eqref{eq:heat} to obtain the scaling limit. This equation has previously been considered in \cite{cirelli2024scaling} with the sphere $\mathbb S^2$ replaced by the $d$-dimensional torus. Existence and uniqueness of the solutions, as well as the regularity theory can be handled in a similar way here, with the adequate caveats to deal with the curvature of the sphere. Although similar, we include the proofs for the reader's convenience. In Section \ref{sec:integral.equation} we prove that these approximations in fact do the job. Finally, in Section \ref{sec:sync} we prove our synchronization result. 

\section{Main results and sketch of the proofs}
\label{sec:setup}
    
	Let $\bS^2:=\{x\in\R^3:\, \|x\|=1\}$ denote the unit sphere embedded in $\R^{3}$ endowed with its surface area measure $\sigma(\cdot)$, where $\|\cdot\|$ denotes the Euclidean distance in $\R^3$. We denote by $\B_r(x)$ the Euclidean ball of radius $r$ in $\R^3$ centered at $x$.
	
	For every $n\in\N$, consider $n$ points $V:=\{x_1,x_2,...,x_n\}$ on $\bS^2$, independently and identically distributed (i.i.d.) according to the uniform distribution with respect to $\sigma(\cdot)$. \red{We assume all the random quantities defined through the paper are defined on a given probability space $(\Omega, \mathcal F, \P)$ and we use $\E$ to denote expectation respect to $\P$. All the almost-sure statements are respect to $\P$.}

	We are also given a parameter $\ep=\ep(n)>0$ that depends on $n$.
    In addition, let $K: \R_+\to\R_+$ be a bounded function with compact support in $[0,1]$ such that $K(r)>0$ for every $r\in[0,1)$. We assume either that $K$ has bounded derivatives or that $K$ is the indicator function of $[0,1]$. We define the (weighted) random geometric graph $\mathbb G_n = (V,\mathcal E)$ with vertex set $V$ and edge set $\mathcal E$, by imposing $e=\{x_i, x_j\}\in \mathcal E$ if and only if $\|x_i-x_j\|^2<\ep$, in which case the edge weight is $w_e:=K\Big(\frac{\|x_i-x_j\|^2}{\ep}\Big)>0$. Let,
	\begin{align*}
		\cN_i:=\left\{j: j\neq i, \|x_i-x_j\|^2 <\ep\right\}\subset \{1,2,...,n\}\backslash\{i\}.
	\end{align*}
    be the set of neighbors of point $i$ in $\G_n$ and consider the random variable $N_i:=\text{Card}(\cN_i)$ representing the number of neighbors of that node (Figure \ref{fig:RGG}). We call $\mathbb G_n$ a Random Geometric Graph (RGG) on $\mathbb S^2$ with parameters $(K,\ep)$.

	The (homogeneous) Kuramoto model formed on $\G_n$ is a finite system of $n$ ordinary differential equations (ODE), with $u^n:[0,\infty)\times V\to\bS^1:=\R/2\pi$ such that,
	\begin{align}
		\begin{cases}\label{eq:kuramoto}
			\displaystyle{\frac{d}{dt}}u^n(t,x_i)&= \displaystyle{\frac{1}{\ep\E(N_i)}}\sum_{j=1, j\neq i}^n\sin\left(u^n(t,x_j)-u^n(t,x_i)\right)K\Big(\frac{\|x_i-x_j\|^2}{\ep}\Big), \\[10pt]
			u^n(0,x_i)&=u_0^n(x_i), \quad \quad i=1,2,...,n.
		\end{cases}
	\end{align}
    
     \begin{figure}
    \begin{center}
    \includegraphics[width=.75\textwidth]{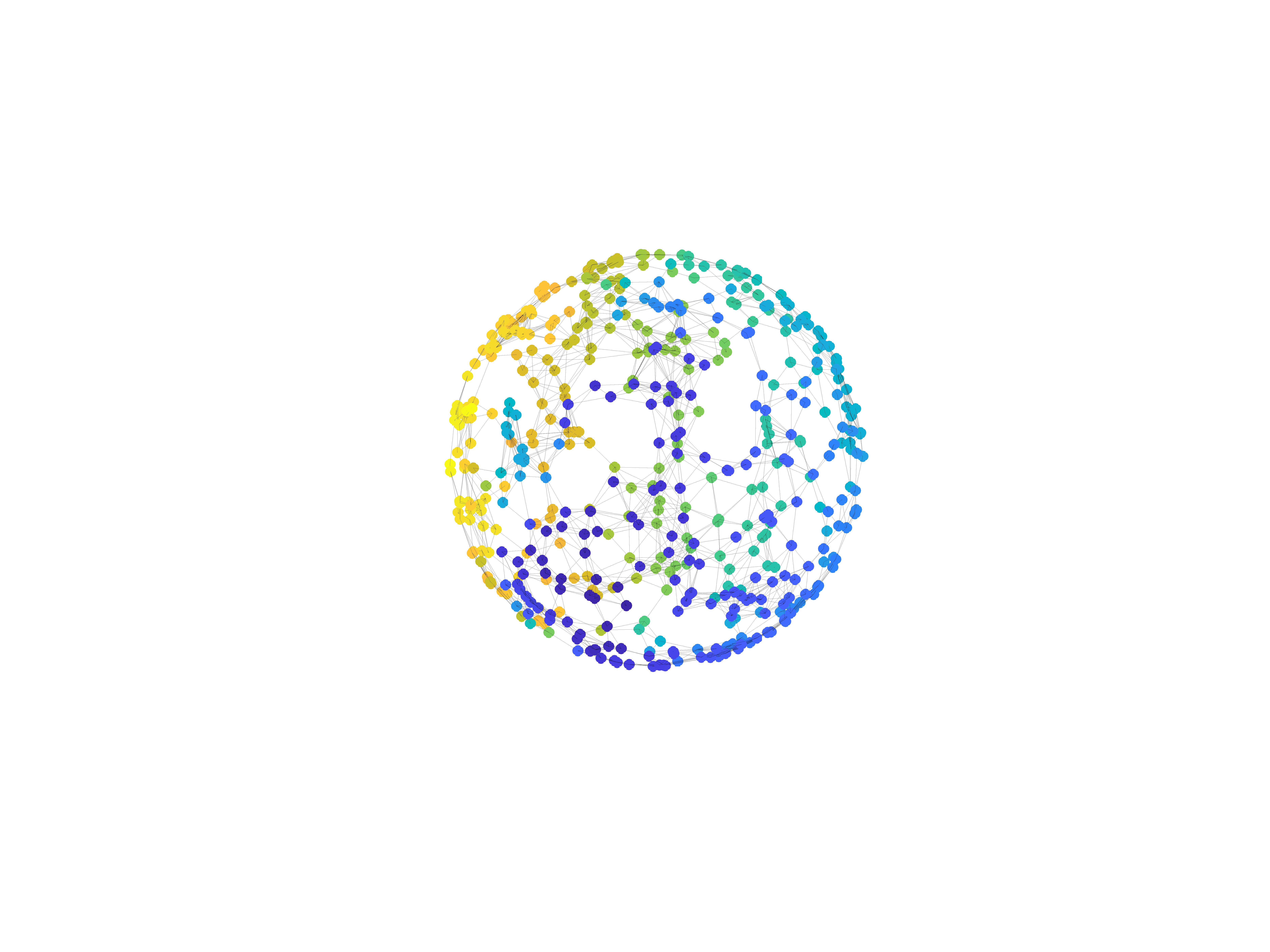}
    \end{center}
    \caption{A connected random geometric graph on $\bS^2$, where each node supports a Kuramoto oscillator whose phase on $\bS^1$ is colored.}
    \label{fig:RGG}
    \end{figure}
    	For every $n$ and realization of the random points $V$, there is a unique solution to \eqref{eq:kuramoto} since it is a finite system of ODEs with Lipschitz coefficients.
	
		In \eqref{eq:kuramoto}, we renormalize the sum on the right-hand side by $\E(N_i)$ instead of $N_i$ since the former is a deterministic constant independent of $i$, and the resulting ODE has a gradient flow structure.
		
		\red{We also note that in the literature on {\em homogeneous} Kuramoto models, there seems to be no ``standard" convention what the preconstant in front of the sum in \eqref{eq:kuramoto} should be. This is natural because one can change this preconstant by a time-change of the system, which clearly does not affect synchronization (or not). The specific form of the constant we choose (namely $\frac{1}{\ep\E(N_i)}$) is ultimately dictated by our desire to perform scaling limit to the heat equation; in particular, the $\frac{1}{\ep}$ comes from the usual second-order difference approximation when ``grid spacing" is $\sqrt{\ep}$. We use this technique to say something for the small $\ep$ / large $n$ models, whereas other papers that do not use this technique may adopt a  different preconstant; as said above, for fixed $\ep,n$ they are time-changes of each other.

        For definiteness we compare our preconstant with those adopted in e.g. \cite{abdalla2024guarantees, cirelli2024scaling, devita2025energy, kassabov2022global}. Our normalization constant $\frac{1}{\ep\E(N_i)}$ is actually in agreement with \cite{cirelli2024scaling}, with the caveat that the random number $N_i$ was used there instead of $\E(N_i)$, and points within distance $\ep$ are connected instead of our $\sqrt{\ep}$. That is why we see the seemingly different normalizing constant $\frac{1}{\ep^2 N_i}$ in \cite{cirelli2024scaling}. In \cite{devita2025energy}, there is an extra $\frac{1}{n}$. In \cite{abdalla2024guarantees}, the constant is just $\frac{1}{n}$, while in \cite{kassabov2022global} the constant is $1$. As said above, for homogeneous models, this constant can be adjusted by a time-change and does not affect the set of stable equilibria of the system.

        Up to our knowledge, the article \cite{cirelli2024scaling}, to which the present paper is closely related, seems to be the first one where one connects the Kuramoto model to the heat equation {\it{and at the same time}} exploits this connection to say things about the synchronization of the Kuramoto dynamics. The scaling limit {\it per se} belongs to a larger literature of continuum limits of discrete dynamics on graphs that includes the Kuramoto model and many more. The precise details of the interaction can vary greatly; some are mean-field interactions, others more localized like ours, and yet others have extra randomness coming from Brownian motions or consider graphons. The series of work of Medvedev and collaborators are prominent examples, see e.g. \cite{MedvedevWgraphs, medvedev2018continuum} and references therein. The approach of using scaling limits to deduce synchronization for large systems of oscillators also appears in the literature. It goes back at least to \cite{nagpal2024}, in which a graphon approach is used to obtain the scaling limit of Kuramoto systems as well as other models of coupled oscillators. The authors use the scaling limit to prove synchronization in Erd\H os-R\'enyi graphs. We remark that in the work \cite{nagpal2024} there is no spatial structure and the edges are independent, as in all the works that deal with graphon models. That is not the case in our RGG. In our case, the spatial structure is responsible for the presence of the Laplacian / heat equation on the sphere and can not be put in the $W-$graphon setting with a smooth $W$. This seems to appear for the first time in \cite{cirelli2024scaling}.
    
        }


		For $x,y\in\bS^2$, their geodesic distance is defined as $\rho(x,y)=\arccos \langle x, y\rangle,$ where $\langle\cdot, \cdot\rangle$ denotes the inner product in $\R^3$. We have 
		\[
		\|x-y\|=\sqrt{2-2\langle x, y\rangle}=\sqrt{2-2\cos \rho(x,y)}.
		\]
		If $x,y$ are very close, then by Taylor expansion of cosine function near $0$, we have $\|x-y\|\approx \rho(x,y)$. Hence, it does not make much difference whether we use the geodesic distance or the Euclidean distance to construct the random geometric graph. In many contexts (e.g.~machine learning) the geodesic is not known in advance, so it is preferable to consider Euclidean distance. 
	
	Since the points $x_1,...,x_n$ are i.i.d. uniform, we have
	\begin{align}\label{exp-degree}
		\E(N_i)=\frac{\sigma\left(\B_{\sqrt{\ep}}(x_i)\cap\bS^2\right)}{\sigma(\bS^2)}n=\frac{\mathsf c_2 \ep n}{4\pi},
	\end{align}
	for some explicit constant $\mathsf c_2>0$ which is independent of $i$. In our regime \eqref{ep-regime}, $\E(N_i)$ goes to $\infty$ as $n\to\infty$ and in fact a bit more holds: the normalizing factor $\ep\E(N_i)$ on the right-hand side of \eqref{eq:kuramoto} diverges faster than $\log n$.

    The condition \eqref{ep-regime} (up to the logarithmic factor) coincides with the threshold for pointwise convergence of graph Laplacian to the Laplace-Beltrami operator on Riemannian manifolds, as it appears in machine learning literature, cf. \cite[Eq. (1.7)]{singer2006graph} (taking $d=2$ there). The convergence of the graph Laplacian in RGG on manifolds towards its manifold counterpart, the convergence of solutions of the discrete Laplace equation towards the continuous one and the convergence of spectral properties (eigenvectors, eigenvalues, spectral clustering) have been extensively studied \cite{coifman2006diffusion, trillos2016continuum, trillos2020error, calder2019game, Berry, Belkin, Hein} due to their prominent relevance in several areas, including machine learning, partial differential equations, differential geometry, calculus of variations, probability, and more.

        Here, besides the convergence of the Laplacian, we need to deal not just with the time evolution of the solution but also with the nonlinearity given by the sine function. The intuition is that the argument of the sine is typically very small (on the order of $\sqrt{\ep}$ if the solution is smooth), hence by Taylor expansion of sine at $0$, \red{we can think of our nonlinear operator as very close to a graph Laplacian, in the limit $\ep\to0$, and they both can be approximated by the continuous Laplacian (see in particular Proposition \ref{conv:interm-to-heat} for how we make the approximation)}. Rigorously proving this, and in a parabolic framework, constitutes the main bulk of our work.
	
	Recall Bernstein's concentration inequality: {\it Let $Y_1,...,Y_n$ be independent mean-zero random variables such that $|Y_j|\le 1$ a.s. Let $S_n=\sum_{j=1}^n Y_j$ and $\lambda>0$. Then we have that 
			\begin{align*}
				\P\left(|S_n|>\lambda\right)\le 2e^{-\frac{\lambda^2/2}{\sum_{j=1}^n\E(Y_j^2)+\lambda/3}}.
	\end{align*}}
        Applying it to $\mathsf Y_j:=1_{\{j\in\cN(i)\}}-\E[1_{\{j\in\cN(i)\}}]$, $j\in\{1,2,...,n\}\backslash\{i\}$  we have that
	\begin{align}\label{bernstein}
		\P\left(|N_i-\E(N_i)|>\lambda\right)\le 2e^{-\frac{\lambda^2/2}{\E(N_i)+\lambda/3}},
	\end{align}
	where we used that $\E(\mathsf Y_j^2)\le \E[1_{\{j\in\cN(i)\}}]$. Taking $\lambda=\delta\E(N_i)$, $\delta>0$, we have that 
	\begin{align}\label{concen}
		\P\big((1-\delta)\E(N_i)\le N_i\le (1+\delta)\E(N_i)\big)\ge 1- 2e^{-\frac{3\delta^2}{2\delta+6}\E(N_i)}.
	\end{align}
	Note that $\E(N_i)=\mathsf c_2 \ep n\gg \ep^2 n\gg \log n$ by \eqref{ep-regime}, hence $N_i$ is highly concentrated around its mean.
	
We will prove that the scaling limit of \eqref{eq:kuramoto} is given by the heat equation on the sphere, $u:[0,\infty)\times\bS^2\to\bS^1$:
	\begin{align}\label{eq:heat}
		\begin{cases}
			\displaystyle{\frac{d}{dt}}u(t,x) &=\kappa\, \Delta_{\bS^2} u(t,x) \\[5pt]
			u(0,x)&=u_0(x),
		\end{cases}
	\end{align}
	where $\Delta_{\bS^2}$ denotes the Laplace-Beltrami operator on $\bS^2$, and
	\begin{align}\label{cst:kappa}
		\kappa:=\frac{1}{4}\int_{\R^2}\|z\|^2K(\|z\|^2)dz,
	\end{align}
	where (by an abuse of notation) $\|\cdot\|$ is the Euclidean distance in $\R^2$. By Proposition \ref{ppn:equiv} below, a continuous function from $\bS^2$ to $\bS^1$ can be thought of as a continuous function from $\bS^2$ to $\R$. Observe that this is not the case when the manifold is not simply connected \cite{cirelli2024scaling}. Hence, our main result is a scaling limit from \eqref{eq:kuramoto} to \eqref{eq:heat} for $\R$-valued functions. We insist on viewing all our equations as equivalently taking values in $\R$ since our proof relies on comparison principles, namely, on $\R$ there is a natural ordering which is not the case on $\bS^1$. We denote with $C^{k,\alpha}(\mathbb M)$ the space of functions from $\mathbb M$ to $\R$ with continuous derivatives up to order $k$, all of them being $\alpha$-H\"older continuous. Similarly, we use $C^k(\mathbb M)$ when we do not require the H\"older continuity and $C(\mathbb M)$ for the space of continuous functions on $\M$. We omit writing $\mathbb M$ when it is not nesessary. For $u_0\in C^{2,\alpha}(\bS^2)$ for some $\alpha\in(0,1)$, there exists a unique $C^{1+\alpha/2, 2+\alpha}([0,\infty)\times\bS^2)$ solution to \eqref{eq:heat}, cf. \cite{krylov}.

    We are ready to state the scaling limit of \eqref{eq:kuramoto} towards \eqref{eq:heat}.
	
\begin{theorem}\label{thm.main}\red{
		Let $\mathbb G_n=(V,\mathcal E)$ be a RGG on $\mathbb S^2$ with parameters $(K,\ep)$. Fix $T>0$ and consider {$u^n: [0,T]\times V\to\R$} the unique solution of \eqref{eq:kuramoto} with initial condition $u_0^n:V\to\R$. Let $u:[0,T]\times\bS^2\to\R$ be the unique solution of \eqref{eq:heat} with initial condition $u_0\in C^{2,\alpha}(\bS^2)$ for some $\alpha\in(0,1)$. Assume \eqref{ep-regime} holds and $K$ is defined as in the beginning of the section. If}
		\begin{align}\label{cond:ini}
			\sum_{n=1}^ \infty  \P\Big(\sup_{1\le i\le n}|u^n_0(x_i) - u_0(x_i)|> \delta \Big)<\infty
		\end{align}
		for every $\delta>0$, then,
		\begin{equation*}
			\lim_{n\to \infty }\sup_{1\le i\le n}\sup_{t\in[0,T]} |u^n(t,x_i) - u(t,x_i)| = 0, \quad \text{almost surely.}
		\end{equation*}
	\end{theorem}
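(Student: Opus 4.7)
The plan is to compare both the Kuramoto ODE \eqref{eq:kuramoto} and the heat equation \eqref{eq:heat} to a common intermediate integral equation, whose well-posedness and regularity are the object of Section \ref{sec:integral.equation}. Schematically, I would introduce $v^n:[0,T]\times\bS^2\to\R$ solving
\[
\partial_t v^n(t,x)=\frac{1}{\mathsf c_2\,\ep^2}\int_{\bS^2}\sin\!\big(v^n(t,y)-v^n(t,x)\big)\,K\!\Big(\tfrac{\|x-y\|^2}{\ep}\Big)\,d\sigma(y),\qquad v^n(0,\cdot)=u_0,
\]
and split the target error via the triangle inequality
\[
|u^n(t,x_i)-u(t,x_i)|\le |u^n(t,x_i)-v^n(t,x_i)|+|v^n(t,x_i)-u(t,x_i)|.
\]
The first summand is probabilistic (the empirical measure of $V$ discretizes the spatial integral); the second is a deterministic PDE-style comparison. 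Throughout, I would use uniform-in-$n$ $C^{2,\alpha}$ bounds on $v^n$ coming from Section \ref{sec:integral.equation}, which is the spherical analogue of \cite{cirelli2024scaling}.

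For the first summand, I would subtract the equation for $v^n(t,x_i)$ from \eqref{eq:kuramoto} and apply Gronwall. The Lipschitz property of $\sin$ contributes a linear feedback in $\sup_j|u^n(t,x_j)-v^n(t,x_j)|$, while the discrepancy between the empirical sum and the integral yields a driving error $\mathcal E^n_i(t)$ built from the bounded, compactly supported quantities $F_i(y,t):=\sin(v^n(t,y)-v^n(t,x_i))K(\|x_i-y\|^2/\ep)$. Because $F_i$ is supported in a spherical cap of area $O(\ep)$, each centered summand in $\mathcal E^n_i(t)$ is bounded by $O((\ep\E(N_i))^{-1})$ with variance $O(\ep/(\ep\E(N_i))^2)$; Bernstein's inequality \eqref{bernstein}, a union bound over $i\le n$, and a standard time discretization then produce a tail summable in $n$ precisely because of \eqref{ep-regime} (which ensures $\ep^2 n/\log n\to\infty$). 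Borel--Cantelli combined with assumption \eqref{cond:ini} closes the Gronwall estimate and delivers $\sup_{i,t}|u^n(t,x_i)-v^n(t,x_i)|\to 0$ a.s.

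For the second summand, I would show that the nonlinear integral operator on the right of the $v^n$-equation converges, as $\ep\to 0$, to $\kappa\,\Delta_{\bS^2}$ on smooth functions. Since $v^n$ is uniformly $C^{2,\alpha}$ and $K$ is supported in $\|x-y\|^2<\ep$, the sine argument is $O(\sqrt{\ep})$, so $\sin(a)=a+O(a^3)$ linearizes the operator up to an $O(\ep^{1/2})$ error after the $\ep^{-2}$ rescaling. Passing to tangent-plane coordinates at $x$ and Taylor-expanding $v^n$ to second order reduces the linear kernel average to $\kappa\,\Delta_{\bS^2}v^n(t,x)$, with the constant \eqref{cst:kappa} arising naturally from the Gaussian-type moment integrals against $K$; the spherical curvature contributes only $O(\ep)$ corrections to the metric factors. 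Finally, a maximum-principle argument for the resulting parabolic equation on $\bS^2$, together with the regularity $u_0\in C^{2,\alpha}$ and the standard heat-equation theory invoked just before Theorem \ref{thm.main}, gives $\sup_{t,x}|v^n(t,x)-u(t,x)|\to 0$.

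The main obstacle is obtaining the uniform-in-$n$ $C^{2,\alpha}$ bounds on $v^n$ that the above arguments need: the $v^n$-equation carries a diverging prefactor $\ep^{-2}$ in front of a quantity that, for smooth profiles, is itself $O(\ep^2)$, so naive differentiation of the right-hand side blows up. To extract the diffusive structure one must combine the antisymmetry of $\sin(v^n(y)-v^n(x))$ under $(x,y)\leftrightarrow(y,x)$ with the symmetry of the kernel in order to cancel the zeroth-order term, and then exploit the localization at scale $\sqrt{\ep}$ to bound derivatives uniformly as $\ep\to 0$. This is precisely the content of Section \ref{sec:integral.equation}; relative to the torus setting of \cite{cirelli2024scaling}, the additional bookkeeping comes from the impossibility of using global Euclidean coordinates on $\bS^2$, forcing one to patch local charts and track curvature terms of order at most $\ep$, which is subleading to the $O(\ep^{1/2})$ Taylor error controlling the convergence.
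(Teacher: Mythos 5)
Your overall architecture matches the paper's exactly: write the error as $|u^n-u^{I,\ep}|+|u^{I,\ep}-u|$ with $u^{I,\ep}$ (your $v^n$) the solution of \eqref{eq:intermediate}; handle the first gap probabilistically via Bernstein/Borel--Cantelli (the content of Proposition~\ref{ppn:kura-int}, cited from \cite{cirelli2024scaling}); handle the second gap deterministically via linearization of $\sin$ and consistency of the integral operator against $\kappa\,\Delta_{\bS^2}$ (the content of Proposition~\ref{conv:interm-to-heat}, proved through Lemma~\ref{lem:linearize} and Proposition~\ref{prop.convergence}). Your first-summand sketch is consistent with the cited proposition.

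There is, however, a genuine gap in your second summand. You Taylor-expand $v^n$ in tangent-plane coordinates and therefore need, as you state explicitly, \emph{uniform-in-$\ep$} $C^{2,\alpha}$ bounds on the solution $v^n=u^{I,\ep}$ of the nonlinear integral equation. These bounds are not established anywhere in the paper, nor are they needed: Section~\ref{sec:integral.equation} proves only a uniform \emph{Lipschitz} bound (Proposition~\ref{ppn:lip}) for $u^{I,\ep}$, which suffices to control the sine argument at scale $\sqrt{\ep}$ and linearize the operator (Lemma~\ref{lem:linearize}, giving $\|u^{I,\ep}-\wt u^{I,\ep}\|_\infty\lesssim\sqrt{\ep}$). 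Proving uniform $C^{2,\alpha}$ bounds for a nonlocal equation with an $\ep^{-2}$ prefactor would be a nontrivial extra undertaking, and your claim that ``this is precisely the content of Section~\ref{sec:integral.equation}'' misreads the paper.

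The paper avoids needing any higher regularity on the intermediate solution through an asymmetric comparison: one sets $w^\ep=\wt u^{I,\ep}-u$, observes that $w^\ep$ satisfies the \emph{linear} integral evolution \eqref{eq:error} driven by the consistency error $\mathrm{Err}_\ep(u)$, which is the nonlocal operator applied to $u$ minus $\kappa\Delta_{\bS^2}u$. This error is estimated by Lemma~\ref{lem:conv-op} using the $C^{2,\alpha}$ regularity of the \emph{heat solution} $u$ --- which comes for free from parabolic Schauder theory since $u_0\in C^{2,\alpha}$ --- and the comparison principle (Lemma~\ref{lem:comparison}) with a space-independent barrier then transfers the bound to $w^\ep$. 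In short, the paper performs the second-order Taylor expansion on $u$, not on the intermediate solution, so only Lipschitz control of $u^{I,\ep}$ is ever needed. If you keep your plan of expanding $v^n$ you must supply the missing $C^{2,\alpha}$ a priori estimate; otherwise you should flip the expansion onto $u$ and conclude through the comparison principle as the paper does.
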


\begin{rem}
\red{Condition \eqref{cond:ini} guarantees the almost sure convergence of $u_0^n$ towards $u_0$ by means of Borel–Cantelli Lemma. Without this convergence we can not expect the conclusion of the theorem to hold. If we replace \eqref{cond:ini} with $\P\Big(\sup_{1\le i\le n}|u^n_0(x_i) - u_0(x_i)|> \delta \Big) \to 0$, we can obtain a similar statement but with convergence in probability instead of almost surely.

Even if $u_0^n$ is deterministic, the quantities $u_0^n(x_i)$ are random since the points $x_i$ are, but also other sources of randomness are allowed for $u_0^n(x_i)$ as far as \eqref{cond:ini} holds. }
\end{rem}

\begin{proof}
    The theorem follows by combining Proposition \ref{conv:interm-to-heat} and Proposition \ref{ppn:kura-int} below.
\end{proof}
Proposition \ref{conv:interm-to-heat} deals with an intermediate equation that interpolates between the ODE \eqref{eq:kuramoto} and the heat equation \eqref{eq:heat}. The equation is given by $u^{I,\ep}:[0,\infty)\times\bS^2\to\R$:
	\begin{equation}
		\begin{cases}\label{eq:intermediate}
			\displaystyle{\frac{d}{dt}}u^{I,\ep}(t,x) &=\displaystyle{ \frac{1}{\mathsf c_2 \ep^2}}\int_{\bS^2}\sin\left(u^{I,\ep}(t,y)-u^{I,\ep}(t,x)\right)K(\ep^{-1}\|x-y\|^2)d\sigma(y)\\[7pt]
			u^{I,\ep}(0,x)&=u_0(x).
		\end{cases}
	\end{equation}

\begin{proposition}\label{conv:interm-to-heat}
   Let $u:[0,\infty)\times\bS^2\to\R$ be the solution of \eqref{eq:heat} and $u^{I,\ep}:[0,\infty)\times \bS^2\to\R$ the solution of \eqref{eq:intermediate} with $u_0\in C^{2,\alpha}(\bS^2)$ for some $\alpha\in(0,1)$. Then for any $T>0$ there exist $C=C(T, {}{\alpha, u_0})>0$ and $\ep_0\in(0,1)$ such that for any $\ep<\ep_0$,
		\begin{align*}
			\left\|u^{I,\ep}-u\right\|_{L^\infty([0,T]\times\bS^2)}\leq C\ep^{\alpha/2}.
		\end{align*} 
\end{proposition}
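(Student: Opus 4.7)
The plan is to set $w := u^{I,\ep} - u$ and decompose the error equation into a \emph{consistency} contribution (how well the nonlocal operator $L_\ep f(x) := \frac{1}{\mathsf c_2 \ep^2}\int_{\bS^2}\sin(f(y)-f(x)) K(\ep^{-1}\|x-y\|^2)\, d\sigma(y)$ approximates $\kappa\Delta_{\bS^2}$ on the smooth heat solution $u$) and a \emph{stability} contribution (a nonlocal diffusion-type operator acting on $w$), then close a maximum-principle argument:
\[
\partial_t w = \bigl(L_\ep u^{I,\ep} - L_\ep u\bigr) + \bigl(L_\ep u - \kappa \Delta_{\bS^2} u\bigr).
\]
I would bound the second bracket deterministically by $C\ep^{\alpha/2}$ and show the first bracket obeys a maximum principle on $w$, so that a Gronwall-type inequality on $\sup_x w(t,x)$ yields the claim.

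For the consistency estimate I would introduce the linearization $\tilde L_\ep f(x) := \frac{1}{\mathsf c_2 \ep^2}\int_{\bS^2}(f(y)-f(x)) K(\ep^{-1}\|x-y\|^2)\, d\sigma(y)$ and split $L_\ep u - \kappa\Delta_{\bS^2}u = (L_\ep u - \tilde L_\ep u) + (\tilde L_\ep u - \kappa\Delta_{\bS^2}u)$. Using $|\sin v - v|\le |v|^3/6$ together with the $C^1$ control $|u(y)-u(x)|\le C\sqrt\ep$ on the support of $K(\ep^{-1}\|x-y\|^2)$ yields $\|L_\ep u - \tilde L_\ep u\|_\infty = O(\ep^{1/2})$. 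For $\|\tilde L_\ep u - \kappa\Delta_{\bS^2}u\|_\infty$ I would work in geodesic normal coordinates $v\in T_x\bS^2$ at each $x$, where $d\sigma = (1+O(\|v\|^2))\,dv$ and $\|\exp_x(v)-x\|^2 = \|v\|^2(1+O(\|v\|^2))$, Taylor-expand $u(\exp_x(v))-u(x)$ to second order with a $C^{2,\alpha}$ remainder, observe that the linear term vanishes by the radial symmetry of $K$, rescale $v=\sqrt{\ep}\,z$ and use the symmetry identity $\int_{\R^2} K(\|z\|^2)\,z_iz_j\,dz = 2\kappa\,\delta_{ij}$ implicit in the definition of $\kappa$ to match the quadratic term to $\kappa\Delta_{\bS^2}u(x)$, and bound the H\"older remainder together with the curvature corrections (from $\sqrt{\det g}$ and from the $O(\|v\|^4)$ perturbation inside $K$) by $O(\ep^{\alpha/2})$ after dividing by $\ep^2$. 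Summing gives $\|L_\ep u - \kappa\Delta_{\bS^2}u\|_{L^\infty([0,T]\times\bS^2)} = O(\ep^{\alpha/2})$.

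For the stability estimate the mean value theorem yields
\[
\sin\bigl(u^{I,\ep}(y)-u^{I,\ep}(x)\bigr) - \sin\bigl(u(y)-u(x)\bigr) = \cos(\xi_{x,y,t})\bigl[w(t,y)-w(t,x)\bigr]
\]
for some intermediate $\xi_{x,y,t}$ between $u^{I,\ep}(t,y)-u^{I,\ep}(t,x)$ and $u(t,y)-u(t,x)$. Invoking a uniform in $\ep$ spatial Lipschitz bound on $u^{I,\ep}$ (to be imported from Section \ref{sec:integral.equation}, where existence, uniqueness, and regularity for \eqref{eq:intermediate} are established) together with the $C^{1+\alpha/2,2+\alpha}$ regularity of $u$, both $|u^{I,\ep}(y)-u^{I,\ep}(x)|$ and $|u(y)-u(x)|$ are $O(\sqrt{\ep})$ on the support of $K$, so $|\xi_{x,y,t}|\le \pi/4$ and $\cos(\xi_{x,y,t})\ge 1/\sqrt{2}>0$ for $\ep<\ep_0$ small enough. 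The operator $w \mapsto L_\ep u^{I,\ep} - L_\ep u$ is then of nonlocal diffusion form with strictly positive kernel; at a spatial maximum of $w(t_0,\cdot)$ at $x_0$ the integrand $w(t_0,y)-w(t_0,x_0)$ is nonpositive, so the operator is $\le 0$ there. Combined with the consistency estimate this gives $\partial_t w(t_0,x_0)\le C\ep^{\alpha/2}$ at every such spatial maximum, with a symmetric lower bound at every spatial minimum. The usual differential-inequality argument applied to $M(t) := \sup_x |w(t,x)|$, with $w(0,\cdot)\equiv 0$, then yields $\|w\|_{L^\infty([0,T]\times\bS^2)} \le CT\ep^{\alpha/2}$.

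The main obstacle I anticipate is the uniform in $\ep$ spatial Lipschitz bound on $u^{I,\ep}$: without it, $\xi_{x,y,t}$ could be large, $\cos(\xi_{x,y,t})$ could change sign, and the maximum principle underlying the stability step would collapse. I expect this bound to be the central technical input from Section \ref{sec:integral.equation}, extending the torus analysis of \cite{cirelli2024scaling} to $\bS^2$, presumably by propagating the $C^{2,\alpha}$ regularity of $u_0$ through a Picard iteration in a suitable H\"older space adapted to the curvature of the sphere.
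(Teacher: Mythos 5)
Your proposal is correct in substance and uses the same two key technical lemmas as the paper (the uniform-in-$\ep$ Lipschitz bound on $u^{I,\ep}$, and the $O(\ep^{\alpha/2})$ pointwise convergence of the linear kernel operator to $\kappa\Delta_{\bS^2}$), but it is organized differently. The paper introduces an intermediate \emph{linearized} nonlocal equation $\wt u^{I,\ep}$ and proves the proposition by chaining two separate estimates, $\|u^{I,\ep}-\wt u^{I,\ep}\|_\infty=O(\sqrt\ep)$ (sine linearization applied to $u^{I,\ep}$, Lemma \ref{lem:linearize}) and $\|\wt u^{I,\ep}-u\|_\infty=O(\ep^{\alpha/2})$ (operator consistency, Proposition \ref{prop.convergence}); each is closed via the comparison principle Lemma \ref{lem:comparison} with explicit space-independent barrier functions, which cleanly sidesteps the Dini-derivative bookkeeping hidden in your ``usual differential-inequality argument applied to $M(t)$''. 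You instead compare $u^{I,\ep}$ and $u$ in one step, splitting the error equation into stability ($L_\ep u^{I,\ep}-L_\ep u$, controlled by a maximum-principle argument) plus consistency ($L_\ep u-\kappa\Delta_{\bS^2}u$), and you apply the sine-to-linear Taylor expansion to the \emph{heat} solution $u$ rather than to $u^{I,\ep}$ — a small simplification, since $u$'s $C^1$ bound is free from parabolic regularity, whereas the paper's Lemma \ref{lem:linearize} must invoke the harder-won Lipschitz bound for $u^{I,\ep}$ there. Both routes still need that Lipschitz bound exactly once, for the same reason you identify: to keep $\cos(\xi)\ge 1/\sqrt 2>0$ so the nonlocal stability operator has a sign at extrema (equivalently, so $\Psi>0$ in Lemma \ref{lem:comparison}). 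One genuine misconception in your closing remarks: you expect the uniform-in-$\ep$ Lipschitz bound to come ``by propagating $C^{2,\alpha}$ regularity through a Picard iteration''. It does not — the Banach fixed-point argument (Proposition \ref{prop.existence}) has a contraction constant scaling like $\ep^{-2}$ and only yields $\ep$-dependent $C^1$ control. The paper obtains the uniform bound by a separate, more geometric argument (Proposition \ref{ppn:lip}): comparing $u^{I,\ep}(t,\phi(\cdot))$ with $u^{I,\ep}(t,\cdot)$ for rotations $\phi$ of $\bS^2$, reducing the Lipschitz seminorm to a supremum over rotations, and then running the same nonlocal comparison principle with the barrier $(t+1)\|u_0\|_{\mathrm{Lip}}\mathsf c_\phi$.
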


The following proposition is proved in \cite{cirelli2024scaling} with $\mathbb S^2$ replaced by the $d$-dimensional torus. The proof for this case is essentially the same and we do not include it.
    
	\begin{proposition}[{\cite[Proposition 3.1]{cirelli2024scaling}}]\label{ppn:kura-int}
		Assume \red{as in \eqref{ep-regime} that $\epsilon = \epsilon(n) \to 0$ as $n \to \infty$  and $\liminf\limits_{n \to \infty} \frac{\epsilon^2 n}{\log n} = \infty$}.  If $u^{n}$ is the unique solution of \eqref{eq:kuramoto}, $u^{I,\ep}$ is the unique $C([0,\infty), C^1(\bS^2))$ solution of \eqref{eq:intermediate},  $u_0\in C^1(\bS^2)$ and  \eqref{cond:ini} holds, we have that
		\[
		\lim_{n\to\infty}\left\|u^{n}-u^{I,\ep}\right\|_{L^\infty([0,T]\times V)}=0, \quad a.s.
		\]
	\end{proposition}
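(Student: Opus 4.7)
I would set $w^n(t,x_i):=u^n(t,x_i)-u^{I,\ep}(t,x_i)$ and subtract \eqref{eq:kuramoto} from \eqref{eq:intermediate} evaluated at $x_i$. Using $\E(N_i)=\mathsf c_2\ep n/(4\pi)$ to align the two normalizations, write $\tfrac{d}{dt}w^n(t,x_i)=A_i(t)+B_i(t)$, where, with $K_{ij}:=K(\ep^{-1}\|x_i-x_j\|^2)$ and $u^{I,\ep}_i:=u^{I,\ep}(t,x_i)$,
\begin{align*}
A_i(t) &:= \frac{1}{\ep\E(N_i)}\sum_{j\ne i}K_{ij}\bigl[\sin(u^n_j-u^n_i)-\sin(u^{I,\ep}_j-u^{I,\ep}_i)\bigr],\\
B_i(t) &:= \frac{1}{\ep\E(N_i)}\sum_{j\ne i}K_{ij}\sin(u^{I,\ep}_j-u^{I,\ep}_i)-\frac{1}{\mathsf c_2\ep^2}\int_{\bS^2}K(\ep^{-1}\|x_i-y\|^2)\sin\bigl(u^{I,\ep}(t,y)-u^{I,\ep}_i\bigr)\,d\sigma(y).
\end{align*}
The first piece is a nonlocal dissipative operator in the unknown $w^n$; the second is a pure Monte-Carlo discrepancy involving only the deterministic function $u^{I,\ep}$.

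\textbf{Max-principle control of $A$.} The plan is a bootstrap: fix $\eta\in(0,\pi/8)$ and let $\tau_n:=\inf\{t\ge 0:\|w^n(t)\|_{L^\infty(V)}\ge\eta\}$. On $[0,\tau_n)$ I would use $\sin a-\sin b=2\cos(\tfrac{a+b}{2})\sin(\tfrac{a-b}{2})$ with $a=u^n_j-u^n_i$, $b=u^{I,\ep}_j-u^{I,\ep}_i$. A uniform-in-$\ep$ Lipschitz bound $\|\nabla u^{I,\ep}\|_\infty\le L$ (extractable from the integral equation, using that $u_0\in C^1(\bS^2)$) gives $|u^{I,\ep}_j-u^{I,\ep}_i|\le L\sqrt\ep$ on neighbors $j\in\cN_i$, so $|a+b|\le 2\eta+2L\sqrt\ep\le\pi/2$ for $\ep$ small. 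At the argmax $i^\ast(t)$ of $i\mapsto w^n(t,x_i)$, the inequality $w^n_j-w^n_{i^\ast}\le 0$ forces $\sin(\tfrac{a-b}{2})\le 0$ while $\cos(\tfrac{a+b}{2})\ge 0$, so every summand in $A_{i^\ast}(t)$ is $\le 0$ and $A_{i^\ast}(t)\le 0$. A mirror argument at the argmin, combined with Danskin's theorem on $\|w^n(t)\|_{L^\infty(V)}=\max(\max_i w^n_i(t),-\min_i w^n_i(t))$, yields
\[
\tfrac{d}{dt}\|w^n(t)\|_{L^\infty(V)}\le \sup_{1\le i\le n}|B_i(t)|\qquad\text{on }[0,\tau_n).
\]

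\textbf{Bernstein control of $B$.} Set $f^{(i)}_t(y):=K(\ep^{-1}\|x_i-y\|^2)\sin(u^{I,\ep}(t,y)-u^{I,\ep}(t,x_i))$, which is supported in the cap $\{\|y-x_i\|^2\le\ep\}$ of area $O(\ep)$, with $|f^{(i)}_t|\le C\sqrt\ep$ (Lipschitz of $u^{I,\ep}$) and $\E[(f^{(i)}_t(x_j))^2]\le C\ep^2$. Conditional on $x_i$, Bernstein's inequality applied to the i.i.d.\ mean-zero variables $f^{(i)}_t(x_j)-\E f^{(i)}_t(x_j)$ yields $\P(|B_i(t)|>\delta)\le 2\exp(-c(\delta)n\ep^2)$. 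To pass to $\sup_{t\in[0,T]}$, I would discretize via a time grid of polynomial-in-$n$ size, using a crude time-Lipschitz estimate for $B_i$ obtained by differentiating \eqref{eq:intermediate} (giving $\|\partial_t u^{I,\ep}\|_\infty\le C\ep^{-1/2}$), and combine with a union bound over $i\in V$:
\[
\P\Bigl(\sup_{[0,T]\times V}|B_i(t)|>2\delta\Bigr)\le C(T,\delta)\,n^{K}\exp(-c(\delta)n\ep^2),
\]
which is summable in $n$ under \eqref{ep-regime} because $n\ep^2/\log n\to\infty$. Borel--Cantelli gives $\sup_{[0,T]\times V}|B_i(t)|\to 0$ a.s.

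\textbf{Closing the bootstrap and main obstacle.} Combining with Borel--Cantelli applied to \eqref{cond:ini} (yielding $\|w^n(0)\|_{L^\infty(V)}\to 0$ a.s.) and integrating the max-principle bound, I get that for $t\in[0,\tau_n)$,
\[
\|w^n(t)\|_{L^\infty(V)}\le\|w^n(0)\|_{L^\infty(V)}+T\sup_{[0,T]\times V}|B_i(s)|=:\varepsilon_n\xrightarrow{\text{a.s.}}0.
\]
For $n$ large, $\varepsilon_n<\eta$, so by continuity of $\|w^n(\cdot)\|_{L^\infty(V)}$, we must have $\tau_n>T$, which closes the bootstrap and gives the claim. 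The technical heart and main obstacle is the uniform-in-$(t,i)$ concentration of $B_i$: the $O(\ep)$-support of $f^{(i)}_t$ produces variance $O(\ep^2)$ per sample, and Bernstein's exponent $-c\,n\ep^2$ matches precisely the regime \eqref{ep-regime} once the union bound over $n$ vertices is absorbed; propagating the pointwise bound to a supremum in $t$ requires extracting enough quantitative time-regularity from the integral equation \eqref{eq:intermediate}.
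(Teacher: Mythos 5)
Your proof is correct and complete in its essential structure; the paper itself defers this proposition to \cite{cirelli2024scaling} rather than spelling out the argument. Your decomposition $\dot w^n=A_i+B_i$, the discrete maximum principle at the argmax/argmin (using the uniform-in-$\ep$ Lipschitz bound of Proposition~\ref{ppn:lip} to keep the sine arguments below $\pi/2$), Bernstein concentration plus a polynomial-in-$n$ time grid for $B_i$, and the continuity bootstrap are exactly the right ingredients, with the exponent $-c\,n\ep^2$ matching the regime \eqref{ep-regime} precisely as needed to absorb the union-bound factor.
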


With Theorem \ref{thm.main} at hand, we turn to the study of synchronization on the sphere.

	\begin{theorem}
		\label{thm:sync} \red{Under the conditions of Theorem \ref{thm.main},} let $A_n$ be the event that equation \eqref{eq:kuramoto} with initial condition $u_0^n$ achieves phase synchronization. Then
		\[
		\red{\P(A_n \, \, \text{\rm occurs for all $n$ large enough}) = 1.}
		\]
	\end{theorem}

The proof of this theorem is given in Section \ref{sec:sync}. 

\section{The integral equation}
\label{sec:integral.equation}

In this section, we first prove that we can actually work with functions with values in $\R$ rather than in $\mathbb S^1$ and next, we prove Proposition \ref{conv:interm-to-heat}. 

\subsection{Equivalence between $\bS^1$-valued and $\R$-valued functions}
	A continuous function $f \colon \mathbb{S}^2 \to \mathbb{S}^1$ can be interpreted as a function with values in $\R$ by identifying each point of $\mathbb{S}^1$ with its argument in $[0,2\pi)$. However, at first, this new function might not seem continuous, since approaching the point $(1,0)$ from different directions could yield arguments $0$ or $2\pi$. Nevertheless, we show that this issue does not arise because $\mathbb{S}^2$ is simply connected. This is a well-known topological fact.  
	
	\begin{proposition}
		\label{ppn:equiv}
		For any continuous function $f\colon \mathbb S^2 \to \mathbb S^1$ there exists a continuous function $\bar f \colon \mathbb S^2 \to \R$ such that $f(x)= (\cos(2\pi \bar f(x)), \sin(2\pi \bar f(x)))$.
	\end{proposition}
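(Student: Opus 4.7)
The plan is to invoke the standard covering space lifting criterion: the exponential map $p\colon\R\to\mathbb S^1$ defined by $p(t)=(\cos(2\pi t),\sin(2\pi t))$ is a covering map, $\mathbb S^2$ is simply connected and locally path-connected, so every continuous $f\colon\mathbb S^2\to\mathbb S^1$ admits a continuous lift $\bar f\colon\mathbb S^2\to\R$ with $p\circ\bar f=f$. This is textbook material (e.g. Hatcher, Proposition 1.33), so in principle one could simply cite it. Since the audience is analytic rather than topological, it may be worth giving a self-contained argument.

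For a self-contained proof I would proceed as follows. First, fix a basepoint $x_0\in\mathbb S^2$ and choose $t_0\in\R$ with $p(t_0)=f(x_0)$. For any $x\in\mathbb S^2$, pick a continuous path $\gamma\colon[0,1]\to\mathbb S^2$ with $\gamma(0)=x_0$, $\gamma(1)=x$. The composition $f\circ\gamma\colon[0,1]\to\mathbb S^1$ is a path starting at $f(x_0)$, and by the classical path-lifting lemma for the covering $p$ there is a unique continuous lift $\widetilde{f\circ\gamma}\colon[0,1]\to\R$ with $\widetilde{f\circ\gamma}(0)=t_0$. Define $\bar f(x):=\widetilde{f\circ\gamma}(1)$.

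The key step is to verify that $\bar f(x)$ does not depend on the chosen path $\gamma$. Given two such paths $\gamma_0,\gamma_1$, since $\mathbb S^2$ is simply connected they are homotopic (rel endpoints) through some continuous $H\colon[0,1]^2\to\mathbb S^2$. Then $f\circ H\colon[0,1]^2\to\mathbb S^1$ is a homotopy of loops (after concatenating with the reverse of the other path) based at $f(x_0)$, and the homotopy-lifting property of the covering $p$ produces a continuous lift $\widetilde{f\circ H}\colon[0,1]^2\to\R$ whose endpoint values are forced to agree. This well-definedness is the only nontrivial point; I expect it to be the main (indeed only) obstacle, and it is exactly where simple connectedness of $\mathbb S^2$ is used.

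Finally, continuity of $\bar f$ follows from a local argument: around any $x\in\mathbb S^2$ choose a path-connected neighborhood $U$ small enough that $f(U)$ lies in an evenly covered neighborhood of $f(x)$ in $\mathbb S^1$; then on $U$ one has $\bar f=q\circ f$ where $q$ is the local inverse of $p$ taking $f(x)$ to $\bar f(x)$, and $q\circ f$ is visibly continuous. The identity $f(x)=(\cos(2\pi\bar f(x)),\sin(2\pi\bar f(x)))$ is built into the construction since $p\circ\bar f=f$ by definition. As the statement already remarks, this fails on non-simply connected manifolds (such as the torus considered in \cite{cirelli2024scaling}), exactly because the path-independence step breaks down.
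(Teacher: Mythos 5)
Your proposal is correct and follows essentially the same approach as the paper's own proof: both use the covering map $p(t)=(\cos(2\pi t),\sin(2\pi t))$, define $\bar f(x)$ as the endpoint of the lift of $f\circ\gamma$ for a path $\gamma$ from a fixed basepoint, establish well-definedness via simple connectedness of $\mathbb S^2$ and homotopy lifting, and prove continuity via evenly covered neighborhoods. The only minor difference is that you additionally note the result is immediate from the textbook lifting criterion (e.g.\ Hatcher, Prop.\ 1.33), which is a fair observation.
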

    
	\begin{proof}
		Let $p(t) = (\cos(2\pi t), \sin(2\pi t))$ be the universal covering of the unit circle.
		Consider $x_0$ to be a fixed, arbitrary point in $\mathbb{S}^2$, and let $y_0 = f(x_0) \in \mathbb{S}^1$. Choose $\bar{y_0} \in \R$ such that $p(\bar{y_0}) = y_0$.
		
		Let $x \in \mathbb{S}^2$ and $\gamma$ be a path in $\mathbb{S}^2$ from $x_0$ to $x$. Then, $f \circ \gamma$ is a path in $\mathbb{S}^1$ from $y_0$ to $f(x)$. Since $p$ is the universal covering map, there exists a unique lift $\overline{f \circ \gamma} : [0,1] \to \R$ of the path $f \circ \gamma$ (i.e.~$p\circ \overline{f \circ \gamma} = {f \circ \gamma}$) that starts at $\bar{y_0}$. In this way, we define $\bar{f}(x) := \overline{f \circ \gamma}(1)$. To show this is well defined, independently of the choice of $\gamma$, let $\gamma'$ be another path in $\mathbb{S}^2$ from $x_0$ to $x$. Since $\mathbb{S}^2$ is simply connected, $\gamma$ and $\gamma'$ are homotopic paths with fixed endpoints. That is, there exists a homotopy $H : [0,1] \times [0,1] \to \mathbb{S}^2$ such that $H(s,0) = \gamma(s)$, $H(s,1) = \gamma'(s)$ for all $s \in [0,1]$, and $H(0,t) = x_0$, $H(1,t) = x$ for all $t \in [0,1]$. Since $f$ is continuous, $f \circ \gamma$ and $f \circ \gamma'$ are homotopic paths with fixed endpoints (via $f \circ H$). Using the universal covering property again, we have that there exist unique lifts $\overline{f \circ \gamma}$ and $\overline{f \circ \gamma'}$ that start at $\bar{y_0}$. Thus, the homotopy $f \circ H$ can be lifted to a homotopy $\overline{f \circ H} : [0,1] \times [0,1] \to \R$ between $\overline{f \circ \gamma}$ and $\overline{f \circ \gamma'}$ with fixed endpoints. Since $\R$ is simply connected, the two lifted paths must coincide at every point and in particular at $\gamma(1)=\gamma'(1)$, implying that the definition of $\bar{f}(x)$ is independent of the choice of path $\gamma$. Finally, we show that $\bar{f}$ is continuous. Since $f$ is continuous and $p$ is the universal covering, for each $x \in \mathbb{S}^2$ there exists an open neighborhood $\mathcal{U}$ of $f(x)$ in $\bS^1$ such that any lift of $f|_{\mathcal{U}}$ to $\R$ is continuous. Therefore, $\bar f$ is continuous at every point of $\mathbb{S}^2$.
	\end{proof}

\subsection{Approximating equations}

Now we switch to the proof of Proposition \ref{conv:interm-to-heat}. We first consider one more approximation, namely we want to compare \eqref{eq:intermediate} to a linear integral-differential equation, $\wt u^{I,\ep}:[0,\infty)\times\bS^2\to\R$:
	\begin{equation}
		\begin{cases}\label{eq:linear}
			\displaystyle{\frac{d}{dt}}\wt u^{I,\ep}(t,x) &=\displaystyle{ \frac{1}{\mathsf c_2 \ep^2}}\int_{\bS^2} \left(\wt u^{I,\ep}(t,y)-\wt u^{I,\ep}(t,x)\right)K(\ep^{-1}\|x-y\|^2)d\sigma(y)\\[7pt]
			\wt u^{I,\ep}(0,x)&=u_0(x).
		\end{cases}
	\end{equation}
	
Our first task is to prove the existence and uniqueness of solutions to \eqref{eq:intermediate} and \eqref{eq:linear}. To that end, we consider a more general integral-differential equation of the form 
    \begin{equation}
		\begin{cases}\label{eq:J}
			\displaystyle{\frac{d}{dt}}\wt u^{I,\ep}(t,x) &=\displaystyle{ \frac{1}{\mathsf c_2 \ep^2}}\int_{\bS^2} J\left(\wt u^{I,\ep}(t,y)-\wt u^{I,\ep}(t,x)\right)K(\ep^{-1}\|x-y\|^2)d\sigma(y)\\[7pt]
			\wt u^{I,\ep}(0,x)&=u_0(x),
		\end{cases}
	\end{equation}
    where $J\in C^2(\R)$ is such that $J(0)=0, |J'(x)|\le1, |J''(x)|\le1$ for all $x$. We follow a fixed point procedure as in \cite{cirelli2024scaling}; let us integrate \eqref{eq:J} with respect to time to get
	\begin{equation}\label{fixedpoint}
		\wt u^{I,\ep}(t,x) = u_0(x) + \frac{1}{\mathsf c_2 \ep^2} \int_{0}^{t} \int_{\bS^2} J\left(\wt u^{I,\ep}(s,y)-\wt u^{I,\ep}(s,x)\right)K(\ep^{-1}\|x-y\|^2)d\sigma(y) ds.
	\end{equation}

	We see that finding a solution of the integral equation \eqref{eq:J} is equivalent to finding
	\[
	\wt u^{I, \ep} \in C\left([0, \infty), C^{1}(\bS^{2})\right)
	\]
	satisfying \eqref{fixedpoint}.
	
	\begin{proposition}\label{prop.existence}
		Fix any $\ep>0$. For any smooth function $u_0\in C^1(\bS^2)$, there exists a unique function $\wt u^{I, \ep} \in C\left([0, \infty), C^{1}(\bS^{2})\right)$ satisfying \eqref{fixedpoint} and hence a unique solution of \eqref{eq:J}.
	\end{proposition}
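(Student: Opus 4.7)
The plan is a Banach fixed point argument in $X_T := C\bigl([0,T],C^1(\bS^2)\bigr)$, equipped with the norm $\|u\|_{X_T}:=\sup_{t\in[0,T]}\|u(t,\cdot)\|_{C^1(\bS^2)}$, for a short time $T=T(\ep)>0$, followed by iteration on successive intervals to reach $[0,\infty)$. Define $\cT : X_T \to X_T$ by
\begin{equation*}
\cT(u)(t,x) := u_0(x) + \frac{1}{\mathsf c_2\ep^2}\int_0^t\!\int_{\bS^2} J\bigl(u(s,y)-u(s,x)\bigr)\,K\bigl(\ep^{-1}\|x-y\|^2\bigr)\,d\sigma(y)\,ds,
\end{equation*}
so that fixed points of $\cT$ in $X_T$ correspond exactly to the solutions of \eqref{fixedpoint}.

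The first step is to check that $\cT$ maps $X_T$ into itself. The inner integrand is supported in the cap $\{y\in\bS^2:\|x-y\|^2<\ep\}$ of surface area $\mathsf c_2\ep$ (see \eqref{exp-degree}), and from $J(0)=0$ and $|J'|\le 1$ we have $|J(r)|\le|r|$, so $\cT(u)(t,x)$ is bounded and continuous in $t$ with values in $C(\bS^2)$. For the $C^1$ regularity in $x$, I would differentiate under the integral sign; when $K$ has bounded derivatives this is straightforward. In the indicator case $K=1_{[0,1]}$ the kernel has a jump along $\|x-y\|^2=\ep$, so I would first pull the integral back to a fixed reference cap by a smooth family of rotations $R_x\in SO(3)$ mapping a chosen reference cap onto the $x$-centered one; after this change of variables the integration domain no longer depends on $x$ and ordinary differentiation under the integral applies. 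Either way one obtains a bound of the form $\|\cT(u)(t,\cdot)\|_{C^1}\le \|u_0\|_{C^1}+C(\ep)\,t\,(1+\|u\|_{X_T})$, together with the corresponding continuity estimate in $t$.

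The second step is to show $\cT$ is a contraction on $X_T$ for $T$ small. Using that $J$ is $1$-Lipschitz,
\begin{equation*}
\|\cT(u)-\cT(v)\|_{L^\infty([0,T]\times\bS^2)} \le \frac{2\|K\|_\infty\,T}{\ep}\,\|u-v\|_{L^\infty([0,T]\times\bS^2)},
\end{equation*}
and the analogous estimate for the spatial gradient, invoking $|J''|\le1$ to compare the differentiated integrands (together with the rotation pull-back in the indicator case), gives $\|\cT(u)-\cT(v)\|_{X_T}\le C_\ep\,T\,\|u-v\|_{X_T}$. Choosing $T=T(\ep)$ with $C_\ep T<1$, Banach's fixed point theorem delivers a unique solution on $[0,T]$. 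Crucially, the contraction constant depends only on $\ep$ and on the universal bounds on $J,J',J''$, and not on the size of the initial datum; hence the identical argument applied on each successive interval $[kT,(k+1)T]$ with $\wt u^{I,\ep}(kT,\cdot)\in C^1(\bS^2)$ as initial condition concatenates into a unique global solution in $C\bigl([0,\infty),C^1(\bS^2)\bigr)$, yielding a classical solution of \eqref{eq:J}.

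I expect the main technical point, as foreshadowed in the paper's remark on the caveats arising from the curvature of $\bS^2$, to be the verification of $C^1$-in-$x$ regularity when $K=1_{[0,1]}$: the cap whose boundary carries the jump of $K(\ep^{-1}\|x-y\|^2)$ moves as $x$ varies, so $\nabla_x$ cannot be commuted with the integral in the naive way. The rotation change of variables described above handles this cleanly once one has fixed a smooth local frame on $\bS^2$ in which to express gradients intrinsically; all other estimates reduce to those already carried out in \cite{cirelli2024scaling} for the torus, because only the surface area of $\mathsf c_2\ep$-caps and the $L^\infty$-bounds on $J,J',J'',K,K'$ enter the computation.
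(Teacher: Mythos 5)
Your plan follows the same broad strategy as the paper (Banach fixed point for \eqref{fixedpoint}, short-time contraction, then iteration), but the central claim that makes your iteration step go through is false, and this is a genuine gap. Write out the gradient part of the contraction estimate: since
\[
\frac{\partial}{\partial\theta}\,\cT(u)(t,x)=\frac{\partial u_0}{\partial\theta}+\frac{1}{\mathsf c_2\ep^2}\int_0^t\!\int_{\bS^2}\Bigl[-J'\bigl(u(s,y)-u(s,x)\bigr)\,\frac{\partial u}{\partial\theta}(s,x)\,\wt K + J\bigl(u(s,y)-u(s,x)\bigr)\,\frac{\partial\wt K}{\partial\theta}\Bigr]d\sigma(y)\,ds,
\]
the difference $\partial_\theta\cT(u)-\partial_\theta\cT(v)$ contains, after the usual add-and-subtract,
\[
\bigl[J'\bigl(u(s,y)-u(s,x)\bigr)-J'\bigl(v(s,y)-v(s,x)\bigr)\bigr]\,\frac{\partial u}{\partial\theta}(s,x)\,\wt K,
\]
whose estimate via $|J''|\le1$ produces a factor $\|u-v\|_{X_T}$ \emph{multiplied by} $\|\partial_\theta u\|_{L^\infty}$. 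The operator is therefore only \emph{locally} Lipschitz on $X_T$, not globally: the Lipschitz constant grows with the $C^1$ size of $u$. Your assertion that ``the contraction constant depends only on $\ep$ and on the universal bounds on $J,J',J''$, and not on the size of the initial datum'' is incorrect, and Banach's theorem cannot be applied on the whole of $X_T$. You must restrict $\cT$ to a closed ball (the paper uses $\sup_t\|f(t,\cdot)\|_{C^1}\le 1+\|u_0\|_{C^1}$), verify invariance of that ball, and accept that the contraction time $T_1$ scales like $1/\|u_0\|_{C^1}$. Consequently, the iteration does not reuse the same step length: at the $k$-th step the $C^1$ norm of the solution may have grown by order $k$, so the admissible time increment shrinks like $\mathsf c/k$. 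Global existence then rests on the divergence of the harmonic series $\sum_k \mathsf c/k=\infty$, not on repetition of a fixed-length step. This is exactly the bookkeeping the paper carries out, and it cannot be bypassed.

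Two smaller comments. First, your idea to pull the cap integral back to a fixed reference cap via a smooth family of rotations $R_x\in SO(3)$ cannot be done globally on $\bS^2$: the map $SO(3)\to\bS^2$, $R\mapsto R x_0$, is a nontrivial $SO(2)$-principal bundle and admits no global continuous section. This is repairable by working with local sections (the $C^1$ or Lipschitz seminorm is a local quantity), but it is a point you would need to address explicitly. Second, even after the pull-back, differentiating $J\bigl(u(s,R_x y)-u(s,x)\bigr)$ in $x$ again produces $\nabla u$ multiplied by a $J'$ term, so the same ball restriction is needed in the indicator case. The paper handles the indicator case differently: rather than differentiating under a moved integral, it estimates the Lipschitz seminorm of $\cT(u)-\cT(v)$ directly by splitting the two integrals into a symmetric-difference-of-caps contribution (area of order $\sqrt{\ep}\,\|\phi(\tilde x)-\tilde x\|$) plus a shift-of-center contribution; that route avoids introducing a smooth rotation frame altogether.
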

	
	\begin{proof}
		Solutions of \eqref{fixedpoint} are fixed points of the operator
		\[
		F_{u_0}(u)(t,x) = u_0(x) + \frac{1}{\mathsf c_2 \ep^2} \int_{0}^{t} \int_{\bS^2} J\left(u(s,y)- u(s,x)\right) K(\ep^{-1}\|x-y\|^2)d\sigma(y) ds,
		\]
		For a fixed initial condition $u_0 \in C^{1}$ and a positive $T$ we consider a closed ball in the Banach space
		\[
		\mathcal{X}_T := \big\{f \in C\left([0, T], C^{1}(\bS^{2})\right) : f \big|_{t=0} = u_0; \; \sup_{t \in [0,T]} \|f(t, \cdot)\|_{C^1(\bS^2)}\le 1+ \red{2}\|u_0\|_{C^1(\bS^2)}\big\},
		\]
		with the norm
		\[
		\|f\|_{\mathcal{X}_{T}} := \sup_{t \in [0,T]} \|f(t, \cdot)\|_{C^1(\bS^2)}.
		\]
		As every point $x \in \bS^{2}$ has a spherical coordinate representation 
		\[ 
		x = \left(\sin(\theta)\cos(\varphi), \sin(\theta)\sin(\varphi), \cos(\theta)\right), \quad \theta \in [0, \pi], \varphi \in [0, 2\pi),
		\] for a function $f\in C^{1}(\bS^2)$ we compute its norm as
		\[
		\|f\|_{C^1(\bS^2)} = \|f\|_{L^{\infty}(\bS^2)} + \left\| \frac{\partial f}{\partial \theta} \right\|_{L^{\infty}(\bS^2)} + \left\| \frac{\partial f}{\partial \varphi} \right\|_{L^{\infty}(\bS^2)}.
		\]
		We remark that this norm is independent of the coordinate system chosen to work on $\mathbb S^2$. Our plan is to apply Banach's fixed point theorem to $F_{u_0}$ in $\mathcal{X}_{T}$. We must check:
		\begin{enumerate}
			\item[$(a)$] $F_{u_0}(\mathcal{X}_{T}) \subseteq \mathcal{X}_{T}$;
			\item[$(b)$] $F_{u_0}$ is a contraction, i.e.~there exists $\nu \in (0, 1)$ such that
			\begin{equation*}
				\|F_{u_0} (u) - F_{u_0}(v) \|_{\mathcal{X}_{T}} \leq \nu \| u - v \|_{\mathcal{X}_{T}},
			\end{equation*}
			for all $u, v \in \mathcal{X}_{T}$.
		\end{enumerate}
	    \red{Let us first show part $(b)$, since the same computations will be useful for proving part $(a)$}. In the following arguments, we will assume the existence of a constant $M > 0$ such that $|K(r)| \leq M$ and $|K'(r)| \leq M$ for all $r > 0$. This is fine when $K$ has a continuous derivative but it is not if $K$ is the indicator function of $[0,1]$. Throughout this proof we assume $K$ is smooth and the case in which $K$ is an indicator is treated in the Appendix. To begin, applying the mean-value theorem and $|J'|\le 1$, we have
		\begin{align*}
			\left| F_{u_0}(u) - F_{u_0}(v) \right| 
            &\le \frac{1}{\mathsf c_2 \ep^2} \int_{0}^{t}  \int_{\bS^2}\left| J\left( u(s,y) - u(s,x) \right) -J\left( v(s,y) - v(s,x) \right) \right| K(\ep^{-1}\|x-y\|^2) d \sigma(y) ds\\
			&\le \frac{1}{\mathsf c_2 \ep^2}\int_{0}^{t} \int_{\bS^2}  \left| 
			\left( u(s,y) - v(s,y) \right)- \left( u(s,x) - v(s,x) \right) \right| K(\ep^{-1}\|x-y\|^2) d \sigma(y) ds  \\
			&\leq \frac{1}{\mathsf c_2 \ep^2} \int_{0}^{t} \sigma(\bS^2) \cdot 2M  \|u(s,\cdot) - v(s, \cdot)\|_{C^{1}(\bS^{2})}  \\
			&\leq \frac{T \cdot \sigma(\bS^2) \cdot 2M}{\mathsf c_2 \ep^2} \|u-v\|_{\mathcal{X}_{T}}.
		\end{align*}
		
		Regarding the derivatives, after noticing that $x$ is the only variable that depends on $\theta$, we have that
		\begin{align*}
		\frac{\partial F_{u_0}(u)}{\partial \theta} = \frac{\partial u_0}{\partial \theta} + \frac{1}{\mathsf c_2 \ep^2} \int_{0}^{t} \int_{\bS^{2}} \Big[&-J'((u(s,y) - u(s,x))\frac{\partial u}{\partial \theta}(s,x) \wt K(\theta,\varphi,y) \\
        &+ J(u(s,y) - u(s,x)) \frac{\partial \wt K(\theta,\varphi,y)}{\partial \theta}\Big] d \sigma(y) ds.
		\end{align*}
		Here $\wt K(\theta, \varphi, y)= K(\ep^{-1}\|x(\theta,\varphi)-y\|^2)$. Regarding $\wt K$, 
		\begin{align*}
			\left| \frac{\partial \wt{K}(\theta, \varphi, y)}{\partial \theta} \right| &= \left| K'\left(\ep^{-1}{\|x-y\|^{2}}\right) \cdot \ep^{-1} \cdot \frac{\partial{\|x-y\|^2}}{\partial \theta} \right| \\
			&\le \left| K'\left(\ep^{-1}{\|x-y\|^{2}}\right) \right|\cdot 2\ep^{-1} \|x-y\| \cdot \left\| \frac{\partial x}{\partial\theta} \right\| \le \frac{2M}{\sqrt{\ep}}.
		\end{align*}
        \begin{align*}
            \left| \frac{\partial \wt{K}(\theta,\varphi,y)}{\partial \varphi} \right| &= \left| K'\left(\ep^{-1}\|x-y\|^{2}\right) \cdot\ep^{-1} \cdot \frac{\partial{\|x-y\|^2}}{\partial \varphi} \right| \\
			&\le \left| K'\left(\ep^{-1}\|x-y\|^{2}\right) \right|\cdot 2\ep^{-1} \|x-y\| \cdot \left\|\frac{\partial x}{\partial\varphi} \right\|\le \frac{2M}{\sqrt{\ep}}.
        \end{align*}
		
		Then,
		\begin{align}
			\nonumber \left|\frac{\partial F(u)}{\partial \theta} - \frac{\partial F(v)}{\partial \theta} \right| 
            & = \Big| \frac{1}{\mathsf c_2 \ep^2} \int_{0}^{t} \int_{\bS^{2}} \Big[\big( -J'((u(s,y) - u(s,x))+ J'((v(s,y) - v(s,x))\big)\frac{\partial u}{\partial \theta}(s,x) \wt K(\theta, \varphi,y)\\
			\nonumber  & -J'((v(s,y) - v(s,x))\left(\frac{\partial u}{\partial \theta}(s,x) - \frac{\partial v}{\partial \theta}(s,x) \right) \wt{K}(\theta,\varphi,y) \\
			\nonumber &\quad + \big( J\left( u(s,y) - u(s,x) \right) - J\left( v(s,y) - v(s,x) \right) \big) \frac{\partial \wt{K}}{\partial \theta}(\theta,\varphi,y)\Big] d \sigma(y) ds \Big| \\
            \nonumber &\leq \frac{1}{\mathsf c_2 \ep^2} \int_{0}^{t} \sigma(\bS^2)\Big( (\red{3}+ \red{4}\|u_0\|_{C^1(\bS^2)})
            \|u(s,\cdot) - v(s, \cdot)\|_{C^{1}(\bS^{2})} \cdot M  \\
			\nonumber &\phantom{\leq \frac{1}{\mathsf c_2 \ep^2} \int_{0}^{t} \int_{\bS^{2}} \|u(s,\cdot) -} + \|u(s,\cdot) - v(s, \cdot)\|_{C^{1}(\bS^{2})} \frac{\red{4}M}{\sqrt{\ep}} \Big) ds \\
			&\leq \frac{T \cdot \sigma(\bS^2) \cdot 2 C}{\mathsf c_2 \ep^2} \|u-v\|_{\mathcal{X}_{T}}, \label{eq:bound.derivative}
		\end{align}
		where $C := \max \left\{M(\red{3}+\red{4}\|u_0\|_{C^1(\bS^2)}), \frac{\red{4}M}{\sqrt{\ep}}\right\}$. Here we used mean-value theorem, $|J'|, |J''| \le 1$ and $|\frac{\partial u}{\partial\theta}(t,x)|\le 1+\red{2}\|u_0\|_{C^1(\bS^2)}$.
		
		Similarly, we have that
		\[
		\left|\frac{\partial F(u)}{\partial \varphi} - \frac{\partial F(v)}{\partial \varphi} \right| \leq\frac{T \cdot \sigma(\bS^2) \cdot 2 C}{\mathsf c_2 \ep^2} \|u-v\|_{\mathcal{X}_{T}}.
		\]
		
		Finally, by combining these three bounds, we obtain
		\[
		\|F_{u_0} (u) - F_{u_0}(v) \|_{\mathcal{X}_{T}} \leq \frac{T \cdot \sigma(\bS^2) \cdot 6C}{\mathsf c_2 \ep^2} \| u - v \|_{\mathcal{X}_{T}}.
		\]
		
       Choosing $T_1 := \frac{\mathsf c_2 \ep^2}{\red{\sigma(\bS^2)} \cdot \red{24}C}$ we get that the map $F_{u_0}$ is a contraction on $\mathcal X_{T_1}$.

       \red{Observe that $F_{u_0}(0)(t,x) = u_0(x)$. Therefore, if we take $v \equiv 0$ and $u \in \mathcal{X}_{T_1}$ in the previous inequality we obtain:
       \begin{align*}
       \|F_{u_0}(u)\|_{\mathcal{X}_{T_1}} &\leq \frac{T_1 \cdot \sigma(\bS^2) \cdot 6C}{\mathsf c_2 \ep^2} \| u\|_{\mathcal{X}_{T_1}} + \|u_0\|_{C^{1}(\bS^2)} \\
       &\leq \frac{1}{4} \left(1 + 2\|u_0\|_{C^1(\bS^2)}\right) + \|u_0\|_{C^1(\bS^2)} \\
       &\leq 1 + 2\|u_0\|_{C^1(\bS^2)}.
       \end{align*}
       Moreover, it follows that if $u \in \mathcal{X}_{T_1}$ then $F_{u_0}(u)\big|_{t=0} = u_0$. Thus, we have proved part $(a)$.}
       
       Hence, by Banach's fixed point theorem, we obtain existence of a unique solution to \eqref{fixedpoint} in the time interval $[0, T_1]$. Since $1/T_1$ depends linearly on $\|u_0\|_{C^1(\bS^2)}$ and $u$ grows at most by one unit in $[0,T_1]$, we can iterate this procedure up to any time $T$, to obtain a solution in $[0,T]$. More precisely, we can construct a sequence of times $T_i$, and the existence of a solution in $[T_i,T_{i+1}]$ for every $i\ge 1$. The above discussion guarantees that
       \[
       T_{i+1} = \sum_{j=0}^iT_{j+1} - T_j \ge  \sum_{j=0}^i \frac{\mathsf c}{j}  \to \infty,
       \]
       and hence exceeds every $T<\infty$ if $i$ is large enough.
       \end{proof}


Proposition \ref{conv:interm-to-heat} is going to be obtained as a consequence of Proposition \ref{prop.convergence} and Lemma \ref{lem:linearize} below.

	\begin{lemma}\label{lem:linearize}
		Fix any finite $T$. Then there exists some finite constant $C=C(T, K)$ such that
		\begin{align*}
			\left\|u^{I,\ep}-\wt u^{I,\ep}\right\|_{L^\infty([0,T]\times\bS^2)}\le C\sqrt{\ep}.
		\end{align*}
	\end{lemma}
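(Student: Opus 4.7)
The plan is to compare the two equations directly. Setting $w:=u^{I,\ep}-\wt u^{I,\ep}$, which starts from $w(0,\cdot)\equiv 0$, I would subtract \eqref{eq:linear} from \eqref{eq:intermediate} and use the splitting $\sin a=a+(\sin a - a)$ to write
\begin{equation*}
\partial_t w(t,x)=L_\ep w(t,x)+R(t,x),
\end{equation*}
where, with the shorthand $\Delta_{xy}f:=f(y)-f(x)$,
\begin{align*}
L_\ep f(x)&:=\frac{1}{\mathsf c_2\ep^2}\int_{\bS^2}\bigl(f(y)-f(x)\bigr)K\bigl(\ep^{-1}\|x-y\|^2\bigr)\,d\sigma(y),\\
R(t,x)&:=\frac{1}{\mathsf c_2\ep^2}\int_{\bS^2}\bigl[\sin(\Delta_{xy}u^{I,\ep})-\Delta_{xy}u^{I,\ep}\bigr]K\bigl(\ep^{-1}\|x-y\|^2\bigr)\,d\sigma(y).
\end{align*}
The strategy is then to prove $\|R(t,\cdot)\|_\infty\le C\sqrt\ep$ and bound $w$ via a maximum-principle argument for $L_\ep$.

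For the maximum principle, I would rewrite $L_\ep f(x)=\int_{\bS^2} f(y)q_\ep(x,y)\,d\sigma(y)-\lambda_\ep(x)f(x)$ with $q_\ep\ge 0$ and $\lambda_\ep(x)=\int q_\ep(x,y)\,d\sigma(y)$, recognizing $L_\ep$ as the generator of a Markov jump process on $\bS^2$. In particular $L_\ep 1=0$ and $L_\ep$ satisfies the positive maximum principle, so its semigroup $e^{tL_\ep}$ is an $L^\infty$-contraction. Since $w(0,\cdot)\equiv 0$, Duhamel's formula yields $\|w(t,\cdot)\|_{L^\infty(\bS^2)}\le\int_0^t\|R(s,\cdot)\|_{L^\infty(\bS^2)}\,ds$.

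To estimate $R$, I would invoke the Taylor bound $|\sin z - z|\le |z|^3/6$. The essential input is a uniform-in-$\ep$ Lipschitz bound on $u^{I,\ep}$: there should exist $L=L(T,u_0)$ such that $|u^{I,\ep}(t,y)-u^{I,\ep}(t,x)|\le L\,\rho(x,y)$ for all $t\in[0,T]$. Granting this, whenever $K(\ep^{-1}\|x-y\|^2)\neq 0$ one has $\|x-y\|\le\sqrt\ep$, hence $|\Delta_{xy}u^{I,\ep}|\le LC\sqrt\ep$ by the comparability of geodesic and Euclidean distances on $\bS^2$, so $|\sin(\Delta_{xy}u^{I,\ep})-\Delta_{xy}u^{I,\ep}|\le C\ep^{3/2}$. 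Combining with the spherical-cap estimate $\int_{\bS^2}K(\ep^{-1}\|x-y\|^2)\,d\sigma(y)=O(\ep)$ gives $\|R(t,\cdot)\|_\infty\le C'\sqrt\ep$, and plugging into Duhamel yields $\|w\|_{L^\infty([0,T]\times\bS^2)}\le CT\sqrt\ep$ as required.

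The hard part will be establishing the uniform-in-$\ep$ Lipschitz bound on $u^{I,\ep}$. The fixed-point construction of Proposition \ref{prop.existence} produces $C^1$ bounds that blow up as $\ep\to 0$ because of the $\ep^{-1/2}$ arising from $\partial\wt K$ in \eqref{eq:bound.derivative}. A separate a priori estimate is therefore required. Formally differentiating \eqref{eq:intermediate} along a smooth tangent vector field, the derivative $\partial u^{I,\ep}$ satisfies an equation of the form $\partial_t(\partial u^{I,\ep})=L_\ep(\partial u^{I,\ep})+G_\ep[u^{I,\ep}]$, where the zeroth-order term $G_\ep$ collects the contributions of $\cos(\Delta_{xy}u^{I,\ep})\partial u^{I,\ep}$ and of $\partial\wt K$. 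The singular $\ep^{-1/2}$ factor in the kernel-derivative term must be absorbed by exploiting that, to leading order, integration of odd functions of $y-x$ against $K(\ep^{-1}\|x-y\|^2)$ produces vanishing first moments over spherical caps, so that $G_\ep$ remains bounded in $L^\infty$ uniformly in $\ep$. The maximum principle for $L_\ep$ then closes a Gronwall loop, producing the desired bound $\|\nabla u^{I,\ep}(t,\cdot)\|_\infty\le C(T,u_0)$, analogously to the torus argument of \cite{cirelli2024scaling}.
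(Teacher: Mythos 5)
Your main argument for the lemma itself is essentially the same as the paper's: split $\sin$ into its linear part plus an $O(|z|^3)$ remainder, use the uniform Lipschitz bound so that $|\Delta_{xy}u^{I,\ep}|\lesssim\sqrt\ep$ on the support of $K$, conclude the error term is $O(\sqrt\ep)$, and propagate it. The paper phrases the last step as a comparison principle (Lemma \ref{lem:comparison}) with the space-independent barrier $\ovl w(t,x)=C\sqrt\ep\,t$, whereas you use an $L^\infty$-contraction/Duhamel formulation for the jump generator $L_\ep$; these are two faces of the same argument.

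Where you genuinely diverge is in how you propose to get the uniform-in-$\ep$ Lipschitz bound, which you correctly identify as the crux. The paper does this in Proposition \ref{ppn:lip} \emph{without} differentiating the equation at all: it expresses the Lipschitz norm via rotations $\phi$ of $\bS^2$, observes that $w(t,x):=u^{I,\ep}(t,\phi(x))-u^{I,\ep}(t,x)$ solves an equation of exactly the form handled by the comparison principle (after a mean-value trick turning the difference of sines into $\Psi(x,y)(w(y)-w(x))$), and beats $w$ against the space-independent barrier $(t+1)\|u_0\|_{\rm Lip}\mathsf c_\phi$. This entirely sidesteps the $\ep^{-1/2}$ singularity of $\partial\wt K$ that you flag. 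Your alternative — differentiating along a vector field and hoping the singular kernel-derivative term cancels to leading order against vanishing odd moments of $K$ over spherical caps — is plausible but substantially more delicate: on a curved manifold the tangent vector field cannot be chosen globally parallel, the curvature introduces correction terms that do not obviously vanish by parity, and $\cos(\Delta_{xy}u^{I,\ep})\partial u^{I,\ep}(y)$ is not purely odd in $y-x$ once $u^{I,\ep}$ is non-constant. As written, the odd-moment cancellation is a genuine gap. If you want to take this route you would need to quantify the subleading (even) parts of these terms and show they only contribute at an order that Gronwall can absorb. The rotation argument avoids all of this and is the one the paper uses.
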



	\begin{proposition}\label{prop.convergence}
		Let $u:[0,\infty)\times\bS^2\to\R$ be the solution of \eqref{eq:heat} and $\wt u^{I,\ep}:[0,\infty)\times \bS^2\to\R$ the solution of \eqref{eq:linear} {}{with $u_0\in C^{2,\alpha}(\bS^2)$ for some $\alpha\in(0,1)$}. Then for any $T>0$ there exists $C=C(T, {}{\alpha, u_0})>0$ and $\ep_0\in(0,1)$ such that for any $\ep<\ep_0$,
		\begin{align*}
			\left\|\wt u^{I,\ep}-u\right\|_{L^\infty([0,T]\times\bS^2)}\leq C\ep^{\alpha/2}.
		\end{align*}
	\end{proposition}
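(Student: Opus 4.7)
The plan is to combine a consistency estimate (showing that the nonlocal operator driving \eqref{eq:linear} approximates $\kappa\Delta_{\bS^2}$ on sufficiently regular functions) with an $L^\infty$ maximum-principle-based stability argument. Throughout, write
$$L^\ep f(x) := \frac{1}{\mathsf c_2\ep^2}\int_{\bS^2}\big(f(y)-f(x)\big)K(\ep^{-1}\|x-y\|^2)\,d\sigma(y),$$
so that \eqref{eq:linear} reads $\partial_t\wt u^{I,\ep}=L^\ep\wt u^{I,\ep}$. By standard parabolic regularity on compact manifolds (already invoked just before Proposition \ref{conv:interm-to-heat}), the solution $u$ of \eqref{eq:heat} satisfies $\sup_{t\in[0,T]}\|u(t,\cdot)\|_{C^{2,\alpha}(\bS^2)}\le C(T,\alpha,u_0)$; this uniform regularity is what will feed the consistency estimate.

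Next I would prove the pointwise consistency bound
$$\|L^\ep f-\kappa\Delta_{\bS^2}f\|_{L^\infty(\bS^2)}\le C\|f\|_{C^{2,\alpha}(\bS^2)}\,\ep^{\alpha/2}\qquad\text{for all }f\in C^{2,\alpha}(\bS^2).$$
Fix $x\in\bS^2$ and pass to geodesic normal coordinates $z\in\R^2$ at $x$ via $y=\exp_x z$. One has the curvature expansions $\|x-\exp_x z\|^2=\|z\|^2+O(\|z\|^4)$ and $d\sigma(\exp_x z)=(1+O(\|z\|^2))\,dz$, and the Taylor expansion $f(\exp_x z)-f(x)=\nabla_g f(x)\cdot z+\tfrac12 z^T\mathrm{Hess}_g f(x)\,z+R(z)$ with $|R(z)|\le C\|f\|_{C^{2,\alpha}}\|z\|^{2+\alpha}$. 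The linear term integrates to zero by radial symmetry of the kernel; the quadratic term reproduces $\kappa\Delta_{\bS^2}f(x)$ exactly, with the normalization $\mathsf c_2$ matching the second-moment integral appearing in \eqref{cst:kappa}; the remainder integrates to $O(\ep^{2+\alpha/2})$ over the support $\{\|z\|\lesssim\sqrt\ep\}$, which becomes $O(\ep^{\alpha/2})$ after dividing by $\mathsf c_2\ep^2$. The curvature corrections from the Jacobian and from $\|x-\exp_x z\|\ne\|z\|$ contribute only $O(\ep)$ and are subdominant. The case $K=\mathbf 1_{[0,1]}$, where $K'$ is undefined, is handled by noting that the symmetric difference of $\{\|x-y\|^2<\ep\}$ and $\{\|z\|^2<\ep\}$ has area $O(\ep^2)$ with integrand of size $O(\ep)$, contributing $O(\ep)$ after normalization.

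Finally I would close the argument via a maximum principle. Set $e(t,x):=\wt u^{I,\ep}(t,x)-u(t,x)$, so that $e(0,\cdot)\equiv 0$ and
$$\partial_t e(t,x)=L^\ep e(t,x)+R^\ep(t,x),\qquad R^\ep(t,x):=L^\ep u(t,x)-\kappa\Delta_{\bS^2}u(t,x),$$
with $\|R^\ep(t,\cdot)\|_\infty\le C(T,\alpha,u_0)\,\ep^{\alpha/2}$ by the consistency bound applied uniformly in $t$ and the regularity from the first paragraph. The operator $L^\ep$ satisfies a pointwise maximum principle: at any spatial maximum $x^*_t$ of $e(t,\cdot)$ one has $L^\ep e(t,x^*_t)\le 0$ because $e(y)-e(x^*_t)\le 0$ for all $y$ and $K\ge 0$. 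A standard Danskin/envelope argument for $m(t):=\max_x e(t,x)$ then yields $m'(t)\le\|R^\ep(t,\cdot)\|_\infty$ almost everywhere, and the symmetric bound on $\min_x e(t,\cdot)$ gives $\|e(t,\cdot)\|_\infty\le CT\ep^{\alpha/2}$, which is the claim.

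The main obstacle is the consistency estimate: the careful bookkeeping of curvature corrections on $\bS^2$ (Euclidean versus geodesic distance, Jacobian of the exponential map) must be balanced against only $C^{2,\alpha}$ regularity of $f$, which is what produces the sharp rate $\ep^{\alpha/2}$ rather than a full $\ep$; the non-smooth case $K=\mathbf 1_{[0,1]}$ adds a further layer, since the kernel itself cannot be Taylor-expanded and must be treated via a direct comparison of supports.
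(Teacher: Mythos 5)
Your proposal is correct and follows essentially the same strategy as the paper: a consistency estimate for the nonlocal operator against $\kappa\Delta_{\bS^2}$ on $C^{2,\alpha}$ functions (this is precisely Lemma~\ref{lem:conv-op}, which the paper states and refers to the literature), combined with uniform parabolic $C^{2,\alpha}$ bounds for $u$ and an $L^\infty$ stability argument for the error $e=\wt u^{I,\ep}-u$. The only cosmetic difference is in the stability step: the paper invokes the comparison principle (Lemma~\ref{lem:comparison}) with the explicit space-independent barrier $C_1t\ep^{\alpha/2}+C_2\ep$, whereas you run a direct Danskin/Hamilton argument on $m(t)=\max_x e(t,x)$ using that $L^\ep e\le 0$ at a spatial maximum; these are equivalent formulations of the same maximum-principle idea.
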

    
To prove Proposition \ref{prop.convergence} and Lemma \ref{lem:linearize}, we make some preparations. For a function $f:\bS^2\to\R$, we define its Lipschitz norm 
	\begin{align}\label{def:Lip}
		\|f\|_{\text{Lip}}:=\sup_{x\neq \bar x\in\bS^2}\frac{|f(x)-f(\bar x)|}{\|x-\bar x\|}.
	\end{align}
	In order to compare the solutions of \eqref{eq:kuramoto} and \eqref{eq:intermediate}, and of \eqref{eq:intermediate} and \eqref{eq:linear}, 
	we need the Lipschitz estimate for $u^{I,\ep}(t,\cdot)$ \red{provided in Proposition \ref{ppn:lip}}, which is uniform in $\ep\in(0,1)$ and $t\in[0,T]$. To get the bound, we first establish a comparison principle. The proof can be found in \cite[Lemma 2.3]{cirelli2024scaling}.
    
	\begin{lemma}[Comparison principle]\label{lem:comparison}
		Fix $T$ finite. Let $\Psi(x,y): \bS^2\times\bS^2\to\R_{\ge0}$ satisfy $\Psi(x,y)>0$ whenever $\|x-y\|< \sqrt{\ep}$, and $v, w:[0, T]\times\bS^2\to\R$ be two continuous functions with continuous time derivative that satisfy
		\begin{align*}
			\frac{d}{dt}v - \frac{1}{\mathsf c_2 \ep^2}\int_{\bS^2}\Psi(x,y)\left(v(t,y)-v(t,x)\right) d \sigma(y) &\geq \frac{d}{dt}w-\frac{1}{\mathsf c_2 \ep^2}\int_{\bS^2}\Psi(x, y)\left(w(t,y)-w(t,x)\right) d \sigma(y)  \\[5pt]
			v|_{t=0}&\geq  w|_{t=0} .
		\end{align*}
		Then, we have
		\[
		v(t,x)\geq w(t,x), \quad \forall t\in[0,T],\; x\in\bS^2.
		\]
	\end{lemma}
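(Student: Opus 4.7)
\medskip

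The plan is to pass to the difference $h := v - w$ and reduce the problem to a one-sided maximum principle. By linearity of the integral operator $\cL\phi(x) := (\mathsf c_2 \ep^2)^{-1} \int_{\bS^2} \Psi(x,y)(\phi(y)-\phi(x)) \, d\sigma(y)$, the two hypotheses become $\frac{d}{dt} h(t,x) \geq \cL h(t,x)$ on $[0,T] \times \bS^2$ together with $h(0,\cdot) \geq 0$, and the claim is equivalent to $h \geq 0$ throughout $[0,T] \times \bS^2$.

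A naive first-touch argument is not enough: at a contact point $(t^*,x^*)$ where $h$ attains the value $0$ as its minimum, the differential inequality only yields $\frac{d}{dt} h \geq \cL h \geq 0$, which is consistent with, rather than contradictory to, the usual $\frac{d}{dt} h \leq 0$ at a first touch. The standard fix is a strict perturbation. Set $\tilde h(t,x) := h(t,x) + \delta(1+t)$ for $\delta > 0$; since $\cL$ annihilates functions that are constant in $x$, a direct computation gives $\frac{d}{dt}\tilde h - \cL \tilde h \geq \delta > 0$. Assume for contradiction that $h(t_0,x_0) < 0$ at some $(t_0,x_0) \in [0,T] \times \bS^2$ and fix $\delta \in (0, -h(t_0,x_0)/(1+T))$ so that $\tilde h(t_0,x_0) < 0$ as well. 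Since $\tilde h(0,\cdot) \geq \delta > 0$, $\tilde h$ is continuous, and $[0,T]\times \bS^2$ is compact, the quantity $t^* := \inf\{t \in [0,T] : \min_{x\in\bS^2} \tilde h(t,x) \leq 0\}$ belongs to $(0,t_0]$, and by continuity there exists $x^* \in \bS^2$ with $\tilde h(t^*,x^*) = 0$ and $\tilde h(t^*,\cdot) \geq 0$ on all of $\bS^2$. For $t < t^*$ we have $\tilde h(t,x^*) > 0 = \tilde h(t^*,x^*)$, so the (continuous) time derivative satisfies $\frac{d}{dt}\tilde h(t^*,x^*) \leq 0$. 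On the other hand, $\Psi \geq 0$ and $\tilde h(t^*,\cdot) \geq \tilde h(t^*,x^*)$ imply $\cL \tilde h(t^*,x^*) \geq 0$, so the strict inequality forces $\frac{d}{dt}\tilde h(t^*,x^*) \geq \delta > 0$, a contradiction. Hence $h \geq 0$ on $[0,T]\times\bS^2$.

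The only genuinely delicate point I anticipate is the choice of the perturbation: because the operator $\cL$ is nonlocal, one cannot exploit second-order information at $x^*$ as in the classical elliptic/parabolic maximum principle, so some tilt (linear in $t$, or an exponential $\delta e^{Ct}$) is essential to produce strict inequality at the first-touch time. Apart from that, the argument is straightforward. I note in passing that the stronger hypothesis $\Psi(x,y) > 0$ for $\|x-y\| < \sqrt\ep$ is not actually used in the proof above; only $\Psi \geq 0$ enters. The strict positivity is presumably needed elsewhere (for a strong comparison principle, or to ensure that the graph structure induced by $\Psi$ is effectively irreducible when one wants to upgrade equality to identity of solutions).
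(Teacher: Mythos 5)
Your proof is correct, and it is the standard perturbation--first-touch argument for nonlocal parabolic comparison principles; the paper itself does not reprove the lemma but cites \cite[Lemma 2.3]{cirelli2024scaling}, which uses the same mechanism. All the steps check out: the perturbation $\tilde h = h + \delta(1+t)$ produces a strict supersolution since $\cL$ annihilates spatially constant functions, the choice $\delta < -h(t_0,x_0)/(1+T)$ ensures $\tilde h$ still dips below zero, the first-touch time $t^*$ is positive and attained with $\min_{x}\tilde h(t^*,\cdot)=0$ by continuity on the compact sphere, and at $(t^*,x^*)$ the left-difference quotient forces $\partial_t \tilde h \le 0$ while $\Psi\ge 0$ and $\tilde h(t^*,\cdot)\ge \tilde h(t^*,x^*)$ force $\cL\tilde h\ge 0$, contradicting the strict inequality. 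Your closing remark is also accurate: only $\Psi\ge 0$ is used here, and the strict positivity of $\Psi$ near the diagonal is a hypothesis the authors carry because it is needed elsewhere (e.g.\ to keep the kernel nondegenerate in the applications), not for this lemma.
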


	\begin{proposition}[Uniform Lipschitz bound]\label{ppn:lip}
		Let $T\in(0,\infty)$ be fixed and $u_0\in C^1(\bS^2)$. There exist $\ep_0=\ep_0(T,u_0)>0$ and $C_T = C_T (u_0)$ finite such that if $\ep < \ep_0$ and $u^{I,\ep}$ is the unique $C([0,T]; C^1(\bS^2))$ solution of \eqref{eq:intermediate}, then 
		\begin{align*}
			\sup_{t\in[0,T]}\left\|u^{I,\ep} (t, \cdot)\right\|_{\rm{Lip}} \le C_T.
		\end{align*}
	\end{proposition}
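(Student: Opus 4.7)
I would exploit the rotation invariance of the kernel $K(\ep^{-1}\|\cdot-\cdot\|^2)$ and of $\sigma$ on $\bS^2$ to derive a \emph{linear} integro-differential equation for the directional derivatives of $u^{I,\ep}$ along any Killing vector field, and then apply a version of the comparison principle of Lemma \ref{lem:comparison} to those linearized equations. Concretely, fix a Killing vector field $X$ on $\bS^2$ generating the one-parameter rotation group $(R_s)_{s\in\R}$ around a unit axis, so that $|X(q)|\le 1$ for every $q\in\bS^2$. Rotation invariance of the kernel and of $\sigma$, applied via the substitution $y\mapsto R_s y$ in \eqref{eq:intermediate}, shows that $v^s(t,x) := u^{I,\ep}(t, R_s x)$ solves \eqref{eq:intermediate} with initial condition $u_0\circ R_s$. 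Differentiating in $s$ at $s=0$ (which can be justified rigorously at the $C([0,T]; C^1(\bS^2))$ level by differentiating the fixed-point relation \eqref{fixedpoint} in the $x$-variable), the function $h(t,x) := (Xu^{I,\ep})(t,x)$ satisfies
\[
\partial_t h(t,x) = \frac{1}{\mathsf c_2 \ep^2}\int_{\bS^2} \Psi_u(t,x,y)\,[h(t,y)-h(t,x)]\,d\sigma(y), \qquad h(0,\cdot)=Xu_0,
\]
with time-dependent weight $\Psi_u(t,x,y) := \cos\bigl(u^{I,\ep}(t,y)-u^{I,\ep}(t,x)\bigr)\,K(\ep^{-1}\|x-y\|^2)$.

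\textbf{Bootstrap and comparison.} Next I would run a bootstrap to ensure positivity of $\Psi_u$ on the kernel's support. Let $M$ be a finite constant bounding $\|u_0\|_{\rm Lip}$ and $\|\nabla_{\bS^2} u_0\|_\infty$ (both controlled by $\|u_0\|_{C^1(\bS^2)}$), set $C_* := \pi M$, and pick $\ep_0\in(0,1)$ so small that $2C_*\sqrt{\ep_0} < \pi/2$. By continuity in $t$ of $u^{I,\ep}(t,\cdot)$ in $C^1(\bS^2)$ (Proposition \ref{prop.existence}), the time
\[
T^* := \sup\bigl\{t\in[0,T] : \|u^{I,\ep}(s,\cdot)\|_{\rm Lip}\le 2C_* \text{ for every } s\in[0,t]\bigr\}
\]
is strictly positive. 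For $t\in[0,T^*]$ and $\|x-y\|<\sqrt{\ep}\le\sqrt{\ep_0}$ we have $|u^{I,\ep}(t,y)-u^{I,\ep}(t,x)|\le 2C_*\sqrt{\ep_0}<\pi/2$, so $\Psi_u(t,x,y)>0$. A straightforward adaptation of Lemma \ref{lem:comparison} to time-dependent nonnegative weights positive on $\{\|x-y\|<\sqrt{\ep}\}$, applied to $h$ and the constant super/subsolutions $\pm\|Xu_0\|_\infty$, yields $\|Xu^{I,\ep}(t,\cdot)\|_\infty \le \|Xu_0\|_\infty \le M$ for every $t\in[0,T^*]$ and every such $X$. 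Since at any $p\in\bS^2$ and any unit $v\in T_p\bS^2$ the unit Killing field with axis $p\times v$ takes value $v$ at $p$, this gives $|\nabla_{\bS^2} u^{I,\ep}(t,p)|\le M$; combined with the bound $d_{\bS^2}(x,y)\le\tfrac{\pi}{2}\|x-y\|$ valid on the unit sphere, this yields $\|u^{I,\ep}(t,\cdot)\|_{\rm Lip}\le\tfrac{\pi}{2} M < C_*$ on $[0,T^*]$. Continuity of the Lipschitz norm in $t$ then forces $T^*=T$, and the proposition holds with $C_T = \tfrac{\pi}{2}M$ (in particular independent of $T$).

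\textbf{Main obstacle.} The cleanest technical hurdle will be the rigorous justification in the first step: at the $C([0,T]; C^1(\bS^2))$ regularity level, verifying that $h=Xu^{I,\ep}$ is in $C^1([0,T]; C(\bS^2))$ and solves the stated linear equation classically, so that the comparison principle can be invoked. This should follow by differentiating the integrated equation \eqref{fixedpoint} in the spatial variable, which is legitimate because the integrand is $C^1$ in $x$, together with the rotation-invariance trick above (a direct differentiation produces boundary-like terms involving $K'$ which cancel exactly via the identity $x\cdot X(y)+y\cdot X(x)=0$ for a Killing field $X$ on $\bS^2$). A minor secondary point is checking that Lemma \ref{lem:comparison} extends to a time-dependent weight $\Psi(t,x,y)$, but the same maximum-principle proof of \cite{cirelli2024scaling} adapts without modification.
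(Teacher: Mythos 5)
Your proposal is correct, and it is essentially the infinitesimal version of the argument the paper runs at the finite-rotation level. The paper does not differentiate along Killing fields: it fixes an honest rotation $\phi$ and applies the comparison principle directly to the finite difference $w(t,x):=u^{I,\ep}(t,\phi(x))-u^{I,\ep}(t,x)$, which solves a linear nonlocal equation whose kernel is a time-dependent average $\Psi(x,y)=\int_0^1 \cos\bigl(s\,(u^{I,\ep}(t,\phi(y))-u^{I,\ep}(t,\phi(x)))+(1-s)\,(u^{I,\ep}(t,y)-u^{I,\ep}(t,x))\bigr)\,ds$ times $K$. This is exactly the finite-difference analogue of your $\Psi_u(t,x,y)=\cos(u^{I,\ep}(t,y)-u^{I,\ep}(t,x))\,K(\ep^{-1}\|x-y\|^2)$, and the paper employs the same stopping-time bootstrap ($\tau_M$ in the paper) to keep the cosine positive on $\{\|x-y\|<\sqrt\ep\}$. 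The key structural ingredients — rotation invariance, linearization via the mean-value trick, a space-independent barrier, the comparison principle, and the bootstrap — are identical; what differs is the level at which the rotation enters.

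Two genuine differences are worth highlighting. First, your constant barrier $\pm\|Xu_0\|_\infty$ yields the $T$-independent bound $C_T=\tfrac{\pi}{2}\|\nabla_{\bS^2}u_0\|_\infty$, whereas the paper uses the time-growing barrier $\ovl w(t,x)=(t+1)\|u_0\|_{\rm Lip}\mathsf c_\phi$ and obtains $C_T=(T+1)\|u_0\|_{\rm Lip}$. Since Lemma~\ref{lem:comparison} only asks for an inequality, the constant barrier works in the paper's setup as well, so your sharper constant is a real (if minor) improvement. Second, and more importantly, the paper's finite-difference route needs \emph{only} the $C([0,T];C^1(\bS^2))$ regularity already furnished by Proposition~\ref{prop.existence}: $w$ is automatically $C^1$ in time because $u^{I,\ep}$ is. Your route instead needs $h=Xu^{I,\ep}$ to have a continuous time derivative, which amounts to showing $\partial_t u^{I,\ep}(t,\cdot)\in C^1(\bS^2)$ continuously in $t$. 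This is precisely the ``main obstacle'' you flag; the rotation trick $y\mapsto R_s y$ handles it cleanly when $K$ is smooth, and your cancellation identity $\langle X(x),y\rangle+\langle x,X(y)\rangle=0$ is the right way to control the case $K=\mathbf 1_{[0,1]}$, but these verifications are extra work that the paper's formulation simply avoids. Finally, your observation that Lemma~\ref{lem:comparison} must be upgraded to time-dependent weights is correct, but it is a shared technicality: the paper's own $\Psi$ in this proof already depends on $t$.
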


    \begin{proof}

We call {\em axis} an infinite line through the origin in $\R^3$. Perpendicular to any axis, there is a plane that passes through the origin, and intersects with $\bS^2$ at a circle. We call this circle {\em equator} (corresponding to that axis). Given any two points $x, \bar x\in\bS^2$, we can find a unique axis and equator such that there is a rotation $\phi_{x,\bar x}:\bS^2\to\bS^2$ around this axis that leaves the equator invariant and sends $x$ to $\bar x$. For any rotation $\phi:\bS^2\to\bS^2$, there is a unique equator that is invariant under $\phi$ which we call $E(\phi)$, and we  write $\mathsf c_\phi:=\|\phi(x)-x\|$ for any (and all) $x\in E(\phi)$. Furthermore, we have
\begin{align}\label{rot-angle}
\|\phi(y)-y\| \le \mathsf c_\phi, \quad \forall y\in\bS^2.
\end{align}

We will prove that for any rotation $\phi: \bS^2\to \bS^2$, and any $\wt x\in E(\phi)$,
		\begin{align}\label{bound-Lip}
			\frac{|u^{I,\ep}(t,\phi(\wt x))-u^{I,\ep}(t,\wt x)|}{\|\phi(\wt x)-\wt x\|}\le C_T,
		\end{align}
		where $C_T$ does not depend on $\phi$ or $\wt x$, and is uniform for $t\in[0,T]$. It can be readily seen that the Lipschitz norm we want to bound \eqref{def:Lip} can be expressed as
		\begin{align}\label{alter-lip}
			\|u^{I,\ep}(t,\cdot)\|_{\rm{Lip}}=\sup_{\phi,\; \wt x\in E(\phi)}\frac{|u^{I,\ep}(t,\phi(\wt x))-u^{I,\ep}(t,\wt x)|}{\|\phi(\wt x)-\wt x\|},
		\end{align}
		where the supremum is over all rotations $\phi:\bS^2\to\bS^2$ and all $\wt x\in E(\phi)$, hence \eqref{bound-Lip} is all we need to prove.
		
		To this end, since $\phi$ is a $1$-to-$1$ map, each $y\in \bS^2$ can be written as $y = \phi (y')$ for some $y'\in \bS^2$. For any $x\in\bS^2, t>0$, we have
		\begin{align*}
			\displaystyle{\frac{d}{dt}}u^{I,\ep}(t, \phi(x)) &=\displaystyle{ \frac{1}{\mathsf c_2 \ep^2}}\int_{\bS^2}\sin\left(u^{I,\ep}(t,y)-u^{I,\ep}(t, \phi(x))\right)K\Big(\frac{\|\phi(x)-y\|^2}{\ep}\Big)d\sigma(y)\\
			&=\displaystyle{ \frac{1}{\mathsf c_2 \ep^2}}\int_{\bS^2}\sin\left(u^{I,\ep}(t,\phi (y'))-u^{I,\ep}(t, \phi(x))\right)K(\ep^{-}\|\phi(x)-\phi(y')\|^2)d\sigma(\phi(y')).
		\end{align*}
		By the property of rotation, we have $\|\phi(x)-\phi(y')\|= \|x-y'\|$ and $d\sigma(\phi(y'))=d\sigma(y')$ i.e.~the Jacobian determinant is $1$. Hence, we have 
		\begin{align*}
			\displaystyle{\frac{d}{dt}}u^{I,\ep}(t, \phi(x)) =\displaystyle{ \frac{1}{\mathsf c_2 \ep^2}}\int_{\bS^2}\sin\left(u^{I,\ep}(t,\phi (y'))-u^{I,\ep}(t, \phi(x))\right)K(\ep^{-1}\|x-y'\|^2)d \sigma(y').
		\end{align*}
		Since $u^{I, \ep}(t,x)$, $x\in\bS^2, t>0$ satisfies 
		\begin{align*}
			\displaystyle{\frac{d}{dt}}u^{I,\ep}(t, x) =\displaystyle{ \frac{1}{\mathsf c_2 \ep^2}}\int_{\bS^2}\sin\left(u^{I,\ep}(t,y)-u^{I,\ep}(t, x)\right)K(\ep^{-1}\|x-y\|^2)d \sigma(y),
		\end{align*}
		taking the difference of preceding two equations, and upon calling $w(t, x):=u^{I,\ep}(t, \phi(x))-u^{I, \ep}(t,x)$ we have 
		\begin{align*}
			&\displaystyle{\frac{d}{dt}} w(t,x)=\displaystyle{\frac{d}{dt}}\big(u^{I,\ep}(t, \phi(x))-u^{I, \ep}(t,x)\big) \\
			&=\displaystyle{ \frac{1}{\mathsf c_2 \ep^2}}\int_{\bS^2}\Big[\sin\left(u^{I,\ep}(t,\phi (y))-u^{I,\ep}(t, \phi(x))\right)-\sin\left(u^{I,\ep}(t,y)-u^{I,\ep}(t, x)\right)\Big]K(\ep^{-1}\|x-y\|^2)d \sigma(y),
		\end{align*}
		with initial condition $w(0,x)=u_0(\phi(x))-u_0(x)$. We can write 
		\begin{align*}
			\sin\left(u^{I,\ep}(t,\phi (y))-u^{I,\ep}(t, \phi(x))\right)-\sin\left(u^{I,\ep}(t,y)-u^{I,\ep}(t, x)\right)&=\Psi(x,y)\big(w(t,y)-w(t,x)\big),
		\end{align*}
		for 
		\begin{align*}
			\Psi(x,y):=\int_0^1 \cos\Big(s\big(u^{I,\ep}(t,\phi (y))-u^{I,\ep}(t, \phi(x))\big)+(1-s)\big(u^{I,\ep}(t,y)-u^{I,\ep}(t, x)\big)\Big) \, ds,
		\end{align*}
		whereby
		\begin{align}\label{eq:w}
			&\displaystyle{\frac{d}{dt}} w(t,x)=\displaystyle{ \frac{1}{\mathsf c_2 \ep^2}}\int_{\bS^2}\Psi(x,y)\big(w(t,y)-w(t,x)\big)K(\ep^{-1}\|x-y\|^2)d \sigma(y).
		\end{align}
		In order to apply the comparison principle, we need to ensure $\Psi(x,y)>0$ whenever $\|x-y\| \le \sqrt{\ep}$. For some large constant $M$ (to be specified), let us define a time
		\begin{align*}
			\tau_M:=\inf\big\{t\ge0:\, \|u^{I,\ep}(t,\cdot)\|_{\rm{Lip}}\ge M\big\}.
		\end{align*}
		In the time interval $[0,\tau_M)$, the Lipschitz norm of $u^{I,\ep}(t,\cdot)$ is controlled, and hence for any $x,y\in\bS^2$ such that $\|x-y\|\le\sqrt{\ep}$
		we have
		\begin{align*}
			|u^{I,\ep}(t,\phi (y))-u^{I,\ep}(t, \phi(x))|&\le M\|\phi(y)-\phi(x)\| = M\|y-x\|\le M\sqrt{\ep},\\
			|u^{I,\ep}(t,y)-u^{I,\ep}(t, x)|&\le M\|y-x\|\le M\sqrt{\ep}.
		\end{align*}
		Hence upon taking $\ep$ small enough (e.g.~$M\sqrt{\ep} < \frac{\pi}{4}$), we can ensure the argument of the cosine in the definition of $\Psi(x,y)$ is less than $\frac{\pi}{4}$ and thus $\Psi(x,y)>0$.
		
		Let us use the {\em barrier} function 
		\begin{align*}
			\ovl w(t,x):=(t+1)\|u_0\|_{\rm{Lip}}\mathsf c_\phi, \quad t\ge0, \, x\in\bS^2.
		\end{align*}
        Since $u_0$ is Lipschitz, for any $x\in\bS^2$ we have 
		\begin{align*}
			|w(0,x)|=|u_0(\phi(x))-u_0(x)|\le \|u_0\|_{\rm{Lip}}\|\phi(x)-x\|\le \|u_0\|_{\rm{Lip}}\mathsf c_\phi=\ovl w(0,x),
		\end{align*}
        where we used \eqref{rot-angle} in the second inequality.  We also have for any $x\in\bS^2, t>0$,
		\begin{align*}
			&\displaystyle{\frac{d}{dt}} \ovl w(t,x) -\displaystyle{ \frac{1}{\mathsf c_2 \ep^2}}\int_{\bS^2}\Psi(x,y)\big(\ovl w(t,y)-\ovl w(t,x)\big)K(\ep^{-1}\|x-y\|^2)d \sigma(y) = \displaystyle{\frac{d}{dt}} \ovl w(t,x) =\|u_0\|_{\rm{Lip}}\mathsf c_\phi\\
			&>0 = \displaystyle{\frac{d}{dt}} w(t,x)-\displaystyle{ \frac{1}{\mathsf c_2 \ep^2}}\int_{\bS^2}\Psi(x,y)\big(w(t,y)-w(t,x)\big)K(\ep^{-1}\|x-y\|^2)d \sigma(y).
		\end{align*}
		since $\ovl w$ is space-independent and we used \eqref{eq:w}. Now by Lemma \ref{lem:comparison}, we have
		\begin{align*}
			w(t,x)\le (t+1)\|u_0\|_{\rm{Lip}}\mathsf c_\phi, \quad \forall t\in[0,\tau_M).
		\end{align*}
		The same argument applied to $-w(t,x)$ yields
		\begin{align*}
			-w(t,x)\le (t+1)\|u_0\|_{\rm{Lip}}\mathsf c_\phi, \quad \forall t\in[0,\tau_M).
		\end{align*}
		In other words, for $t\in[0,\tau_M)$, we have
		\begin{align*}
			\sup_{x\in\bS^2}\frac{|u^{I,\ep}(t, \phi(x))-u^{I, \ep}(t,x)|}{\mathsf c_\phi}\le (t+1)\|u_0\|_{\rm{Lip}}.
		\end{align*}
		Since for any $\wt x\in E(\phi)$, we have $\|\phi(\wt x)-\wt x\|=\mathsf c_\phi$, 
		\begin{align*}
			\frac{|u^{I,\ep}(t, \phi(\wt x))-u^{I, \ep}(t,\wt x)|}{\|\phi(\wt x)-\wt x\|}\le (t+1)\|u_0\|_{\rm{Lip}},
		\end{align*}
		where the right-hand side is uniform for all rotations $\phi$ and $\wt x\in E(\phi)$. Namely, we have
		\begin{align*}
			\sup_{\phi, \; \wt x\in E(\phi)}\frac{|u^{I,\ep}(t, \phi(\wt x))-u^{I, \ep}(t,\wt x)|}{\|\phi(\wt x)-\wt x\|}\le (t+1)\|u_0\|_{\rm{Lip}},
		\end{align*}
		in the time interval $t\in[0,\tau_M)$. In view of \eqref{alter-lip} it remains to take $M>2(T+1)\|u_0\|_{\rm{Lip}}$ so that $\tau_M>T$, and obtain 
		\begin{align*}
			\sup_{t\in[0,T]}\|u^{I,\ep} (t, \cdot)\|_{\rm{Lip}}\le C_T=(T+1)\|u_0\|_{\rm{Lip}}.
		\end{align*}
	\end{proof}

	\begin{proof}[Proof of Lemma \ref{lem:linearize}]
		By Proposition \ref{ppn:lip}, we have $|u^{I,\ep}(t,y)-u^{I,\ep}(t,x)|\le C_T\|x-y\|\le C_T\sqrt{\ep}$ for any $t\in[0,T]$ and $x,y\in\bS^2$ such that $\|x-y\|^2\le\ep$.  Since $\sin x = x+ O(x^3)$ for $x$ close to $0$, we have that 
		\begin{align*}
			\displaystyle{\frac{d}{dt}}u^{I,\ep}(t,x) &=\displaystyle{ \frac{1}{\mathsf c_2 \ep^2}}\int_{\bS^2}\sin\left(u^{I,\ep}(t,y)-u^{I,\ep}(t,x)\right)K(\ep^{-1}\|x-y\|^2)d \sigma(y)\\
			&\le\displaystyle{ \frac{1}{\mathsf c_2 \ep^2}}\int_{\bS^2}\left(u^{I,\ep}(t,y)-u^{I,\ep}(t,x)\right)K(\ep^{-1}\|x-y\|^2)d \sigma(y)\\
			&\quad +\displaystyle{ \frac{1}{\mathsf c_2 \ep^2}}\int_{\bS^2} C(C_T\sqrt{\ep})^3 K(\ep^{-1}\|x-y\|^2)d \sigma(y)\\
			&=\displaystyle{ \frac{1}{\mathsf c_2 \ep^2}}\int_{\bS^2}\left(u^{I,\ep}(t,y)-u^{I,\ep}(t,x)\right)K(\ep^{-1}\|x-y\|^2)d \sigma(y) +C\sqrt{\ep}.
		\end{align*}
		Denote $w= u^{I,\ep}-\wt u^{I,\ep}$. From the above expression and \eqref{eq:linear} we  have
		\[
		\displaystyle{\frac{d}{dt}}w(t,x) -\displaystyle{ \frac{1}{\mathsf c_2 \ep^2}}\int_{\bS^2}\left(w(t,y)-w(t,x)\right)K(\ep^{-1}\|x-y\|^2)d \sigma(y) \le C\sqrt{\ep}
		\]
		with $w|_{t=0}=0$. On the other hand, considering the barrier $\ovl w(t,x)=C\red{\sqrt{\ep}} t$ which is $x$-independent, we have by the comparison principle Lemma \ref{lem:comparison},
		\[
		w(t,x)= \left(u^{I,\ep}-\wt u^{I,\ep}\right)(t,x) \le C\sqrt{\ep} t.
		\]
		Analogous argument applied to $-w$ yields that 
		\[
		-w(t,x)= \left(\wt u^{I,\ep}-u^{I,\ep}\right)(t,x) \le C\sqrt{\ep} t.
		\]
		Taken together, the claim is proved.
	\end{proof}
	
	To prove Proposition \ref{prop.convergence}, we need the pointwise convergence of the linear integral operator to the Laplace-Beltrami operator. Results of this type are well known in the literature, e.g.~\cite[Eq. (1.4)]{singer2006graph}, \cite[Lemma 8]{coifman2006diffusion}, since they appear in machine learning contexts.
	
	\begin{lemma}[Pointwise convergence of operator]\label{lem:conv-op}
		Let $v\in C^{2,\alpha}{(\bS^2)}$ for some  $\alpha\in(0,1)$, then there exists some finite constant $\wt C$ depending only on the $C^{2,\alpha}$-norm of $v$, such that
		\begin{align*}
		\sup_{x\in\bS^2}\Big|\frac{1}{\mathsf c_2 \ep^2}\int_{\bS^2}\left(v(y)-v(x)\right)K(\ep^{-1}\|x-y\|^2) d \sigma(y) -\kappa\, \Delta_{\bS^2} v(x) \Big|\le \wt C\ep^{\alpha/2}.
		\end{align*}
	The constant $\kappa$ is defined in \eqref{cst:kappa}.
	\end{lemma}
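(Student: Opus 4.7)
The plan is to adapt the standard ``convergence of nonlocal kernel to Laplacian'' argument familiar from manifold learning to give the sharp $O(\ep^{\alpha/2})$ rate under $C^{2,\alpha}$ regularity. For fixed $x\in\bS^2$, I would use geodesic normal coordinates $y=\exp_x(z)$ with $z\in T_x\bS^2\simeq\R^2$; only $z$'s with $\|z\|\le r(\ep):=\arccos(1-\ep/2)=\sqrt{\ep}+O(\ep^{3/2})$ contribute, since the support condition $\|x-y\|^2\le\ep$ becomes $2-2\cos\|z\|\le\ep$. Two exact expansions on the round sphere drive the computation:
\begin{equation*}
\|x-y\|^2=2-2\cos\|z\|=\|z\|^2-\tfrac{1}{12}\|z\|^4+O(\|z\|^6),\quad d\sigma(y)=\tfrac{\sin\|z\|}{\|z\|}\,dz=\bigl(1-\tfrac{1}{6}\|z\|^2+O(\|z\|^4)\bigr)dz.
\end{equation*}

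Next, I would Taylor expand $v$ in these coordinates,
$v(\exp_x z)=v(x)+\langle\nabla v(x),z\rangle+\tfrac{1}{2}\langle H(x)z,z\rangle+R(x,z)$, with $|R(x,z)|\le C\|v\|_{C^{2,\alpha}(\bS^2)}\|z\|^{2+\alpha}$, and use that at the origin of normal coordinates the Christoffel symbols vanish, so $\mathrm{tr}\,H(x)=\Delta_{\bS^2}v(x)$. Rescaling $z=\sqrt{\ep}\,w$ transforms $dz$ into $\ep\,dw$, turns $\ep^{-1}\|x-y\|^2$ into $\|w\|^2+O(\ep\|w\|^4)$, and hence (using $|K'|\le M$ in the smooth case) turns $K(\ep^{-1}\|x-y\|^2)$ into $K(\|w\|^2)+O(\ep\|w\|^4)$. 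Substituting into $\tfrac{1}{\mathsf c_2\ep^2}\int(v(y)-v(x))K\,d\sigma$, the linear-in-$z$ contribution yields an odd integrand $\propto\langle\nabla v(x),w\rangle K(\|w\|^2)$ that vanishes by radial symmetry of $K$; the quadratic-in-$z$ contribution, through the identity $\int_{\R^2}w_iw_jK(\|w\|^2)dw=\tfrac{\delta_{ij}}{2}\int_{\R^2}\|w\|^2K(\|w\|^2)dw$, reduces to a multiple of $\mathrm{tr}\,H(x)=\Delta_{\bS^2}v(x)$, with the precise constant matching $\kappa$ once the definitions of $\mathsf c_2$ and the identity $\int_{\R^2}\|w\|^2K(\|w\|^2)dw=4\kappa$ are inserted.

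Finally, I would collect the error terms. The $C^{2,\alpha}$ Taylor remainder contributes
$\tfrac{1}{\mathsf c_2\ep^2}\cdot\ep\cdot\ep^{1+\alpha/2}\int\|w\|^{2+\alpha}K(\|w\|^2)dw=O(\ep^{\alpha/2})$,
which dictates the declared rate. The perturbations coming from the Taylor expansion of $K$ (since $\ep^{-1}\|x-y\|^2\ne\|w\|^2$), from the Jacobian $\tfrac{\sin\|z\|}{\|z\|}\ne 1$, and from the boundary mismatch between $\{\|w\|\le r(\ep)/\sqrt{\ep}\}$ and the unit ball each contribute $O(\ep)$, which is subdominant since $\alpha<1$; crucially, the odd-in-$w$ integrands created by these higher-order corrections continue to vanish by radial symmetry of $K$, so no spurious $O(\sqrt{\ep})$ term enters. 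Uniformity in $x$ is automatic because every bound depends only on $\|v\|_{C^{2,\alpha}(\bS^2)}$, on $K$, and on the (homogeneous) geometry of $\bS^2$. The main obstacle is the careful bookkeeping of these perturbative contributions and verifying that the odd-integrand cancellations genuinely propagate at every order of expansion; the non-smooth case $K=\mathbf{1}_{[0,1]}$ is handled separately in the appendix, because the boundary discontinuity of $K$ no longer admits a smooth Taylor expansion and the contribution of the thin boundary annulus must be estimated directly.
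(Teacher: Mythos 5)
The paper itself does not prove Lemma \ref{lem:conv-op}; it cites \cite{singer2006graph} and \cite{coifman2006diffusion}. Your proposal supplies the standard manifold-learning argument that those references use: geodesic normal coordinates $y=\exp_x(z)$, the exact sphere formulas $\|x-y\|^2=2-2\cos\|z\|$ and $d\sigma=\tfrac{\sin\|z\|}{\|z\|}\,dz$, a $C^{2,\alpha}$ Taylor expansion of $v\circ\exp_x$ with remainder $O(\|z\|^{2+\alpha})$, the identification $\operatorname{tr}H(x)=\Delta_{\bS^2}v(x)$ at the centre of normal coordinates, the rescaling $z=\sqrt{\ep}\,w$, and cancellation of the odd term by radial symmetry of $K$. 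The structure is sound, the dominant error is correctly traced to the $C^{2,\alpha}$ remainder and gives $O(\ep^{\alpha/2})$, and the subdominant $O(\ep)$ contributions (kernel and Jacobian expansions, boundary mismatch) are correctly identified as odd-cancelling or harmless, so the uniform-in-$x$ bound follows from the homogeneity of $\bS^2$ and the fact that every constant depends only on $\|v\|_{C^{2,\alpha}}$ and $K$.

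Two points deserve a closer look. First, you assert ``the precise constant matching $\kappa$'', but you do not carry the computation through. Doing so, one finds that the quadratic term produces
\[
\frac{1}{2\mathsf c_2}\int_{\R^2}\langle Hw,w\rangle K(\|w\|^2)\,dw
=\frac{1}{2\mathsf c_2}\cdot\operatorname{tr}H(x)\cdot\frac{1}{2}\int_{\R^2}\|w\|^2K(\|w\|^2)\,dw
=\frac{\kappa}{\mathsf c_2}\,\Delta_{\bS^2}v(x),
\]
while from \eqref{exp-degree} and the exact cap-area formula $\sigma(\B_{\sqrt\ep}(x)\cap\bS^2)=\pi\ep$ one gets $\mathsf c_2=\pi$, so the coefficient is $\kappa/\pi$ and not $\kappa$ as stated in the lemma. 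This looks like a normalization inconsistency in the text (it is immaterial for the synchronization results, since it only rescales time in the limiting heat equation), but since your proposal explicitly claims the constant matches $\kappa$ you should either reconcile it or flag the discrepancy rather than assert agreement. Second, you attribute the handling of the indicator kernel $K=1_{[0,1]}$ to the appendix, but the appendix is only about the Banach fixed-point/existence step (bounding $C^1$-norms of $F_{u_0}(u)-F_{u_0}(v)$). For the present lemma the indicator case requires no separate section: your own annulus argument already shows the mismatch between $\{\|w\|\le r(\ep)/\sqrt\ep\}$ and $\{\|w\|\le1\}$ has measure $O(\ep)$ after rescaling, giving an $O(\ep)$ contribution, which is subdominant.
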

	
	We are ready to prove Proposition \ref{prop.convergence}.
    
	\begin{proof}[Proof of Proposition \ref{prop.convergence}]
		Since $u_0\in C^{2, \alpha}(\bS^2)$ for some $\alpha\in(0,1)$, parabolic regularity theory guarantees that the unique solution of the heat equation $u$ is $C^{2,\alpha}$ in space and $C^{1,\alpha/2}$ in time, cf. \cite{krylov} for the Euclidean setting but it extends to the sphere case. For the manifold case, see also \cite{milgram1951}.
		Let $w^\ep:=\wt u^{I,\ep}-u$. Then the function $w^\ep$ satisfies
		\begin{align}\label{eq:error}
		\frac{d}{dt}w^\ep = \frac{1}{\mathsf c_2 \ep^2}\int_{\bS^2}\left(w^\ep(t,y)-w^\ep(t,x)\right)K(\ep^{-1}\|x-y\|^2)d \sigma(y) + {\mathrm{Err}}_\ep(u)
		\end{align}
		with $ w^\ep|_{t=0}=0$, where
		\begin{align*}
			{\mathrm{Err}}_\ep(u):=\frac{1}{\mathsf c_2 \ep^2}\int_{\bS^2}\left(u(t,y)-u(t,x)\right)K(\ep^{-1}\|x-y\|^2) d \sigma(y) -\kappa\Delta_{\bS^2} u(x).
		\end{align*}
		By Lemma \ref{lem:conv-op} and the uniform $C^{2,\alpha}$ bound for the unique solution $u(t, \cdot)$ of \eqref{eq:heat} for $t\in[0,T]$, we have $|{\mathrm{Err}}_\ep(u)|\le \wt C\ep^{\alpha/2}$ for some finite constant $\wt C=\wt C(T, u_0)$ independent of $\ep$.
		
		To use the comparison principle, let us consider the space-independent barrier function,
		\[
		\bar{w}:=C_1t\ep^{\alpha/2}+C_2\ep,
		\]
		so that,
		\[
		\frac{d}{dt}\bar w - \frac{1}{\mathsf c_2 \ep^2}\int_{\bS^2}\left(\bar w(t,y)-\bar w(t,x)\right) K(\ep^{-1}\|x-y\|^2)d \sigma(y) =\frac{d}{dt}\bar w=C_1\ep^{\alpha/2}.
		\]
		In view of \eqref{eq:error}, we can choose $C_1>\wt C$ and $C_2>0$ such that $\bar w$ and $w^\ep$ satisfy the assumptions of Lemma \ref{lem:comparison}. Then, Lemma \ref{lem:comparison} yields
		\[
		\wt u^{I,\ep}-u=w^\ep\leq C_1t\ep^{\alpha/2}+C_2\ep
		\]
		for all $t\in[0,T]$. An analogous reasoning with $-w^\ep$ gives the lower bound and hence proves the result.
	\end{proof}

	\section{Synchronization}
	\label{sec:sync}

    Having proved the scaling limit for \eqref{eq:kuramoto}, we are ready to prove the synchronization result.
    
	\begin{proof}[Proof of Theorem \ref{thm:sync}]
		First, note that the solution $u$ of the heat equation \eqref{eq:heat} with initial condition $u_0$ converges in $L^\infty$ norm to the constant $\iota= \iota(u_0) :=\sigma(\mathbb S^2)^{-1}\int_{\bS^2} u_0(x)d\sigma(x)$, i.e.
		\[
		\lim_{t\to \infty} \|u(t,\cdot) - \iota\|_{L^\infty(\mathbb S^2)} = 0,
		\]
		see \cite{milgram1951}. In particular, there is $T>0$ such that $\|u(T,\cdot) - \iota\|_{L^\infty(\mathbb S^2)} < \pi/8$. Let $u^n$ be the solution of \eqref{eq:kuramoto} with initial condition $u_0^n$. From our Convergence Theorem \ref{thm.main} we get the existence of a set $\tilde \Omega$ with $\P(\tilde \Omega)=1$ such that in that event it holds $\max_{1\le i \le n}|u^n(T,x_i) - \iota| < \pi/4$ for $n$ large enough. In particular, for those $n$ we have
		\begin{equation}
			\label{phase.gamma}
			\max_{1\le i,j \le n}|u^n(T,x_i) - u^n(T,x_j)| < \pi/2.
		\end{equation}
		Let $B_n = \{\mathbb G_n \text{\rm{ is connected}} \}$. In the regime given by \eqref{ep-regime} we have $\P(B_n^c) \le \exp(-\sqrt n)$ for $n$ large enough \cite{Walters}. It is well known that when $\mathbb G_n$ is connected and \eqref{phase.gamma} holds, phase synchronization is achieved \cite{dorfler2011critical, ling2020critical, bullo2020lectures}. We obtain that $\tilde \Omega \cap B_n \subset A_n$. By means of Borel-Cantelli lemma $\P((\tilde \Omega \cap B_n)^c \textrm{ i.o.})=0$ and the same holds for $A_n^c$. This finishes the proof.
	\end{proof}

Notice that Theorem \ref{thm:sync} is not enough to guarantee global synchronization since we require $u_0^n \to u_0 \in C^{2,\alpha}$. We are saying nothing about initial conditions $u_0^n$ that converge to a non-smooth function, like in Figure \ref{fig:solar-time}, or when they do not converge, like if they are given by i.i.d. random variables, or many other choices.

    \begin{figure}
    \begin{center}
    \includegraphics[width=.75\textwidth]{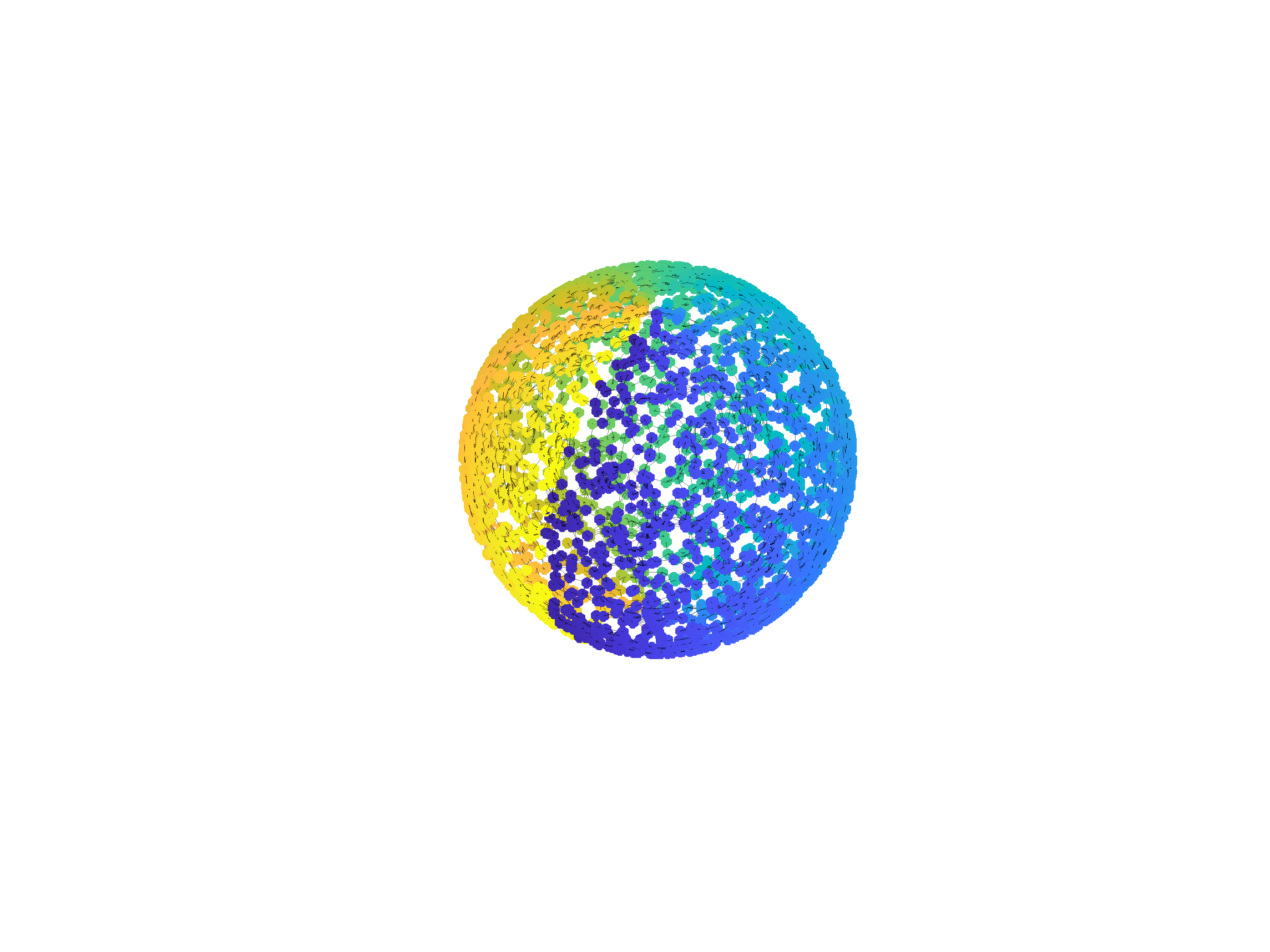}
    \end{center}
    \caption{Solar time. Different colors represent different values of $u$. Since the function is interpreted as taking values in $\mathbb S^1$, the function is continuous at the ``International Day Line'', but not at the poles. Our theorem does not discard the possibility of a stable state close to it, although numerical computations suggest it is not the case.}
    \label{fig:solar-time}
    \end{figure}

\section*{Appendix}

In this appendix we discuss a version of Proposition \ref{prop.existence} when $K$ is the indicator function of an interval, $K(r) := 1_{[0,1]}(r)$. Then $K\left(\frac{\|x-y\|^2}{\ep}\right) = 1_{\B_{\sqrt{\ep}}(x)} (y)$ and we assign equal weights to all the edges. This is an important case but is not included in the proof of Proposition \ref{prop.existence} since $K$ is not continuous (let alone $C^1$, as it is required in the proof). However, this case can be treated separately. We remark that the only place in the whole manuscript where the smoothness of $K$ is used is to obtain the bounds \eqref{eq:bound.derivative} on the derivatives of $F_{u_0}(u) -  F_{u_0}(v)$ that are used to apply Banach's fixed point theorem.

In this appendix we obtain the same kind of bounds but for $K$ being the indicator of $[0,1]$ and hence, we obtain all our theorems for that case as well.

Given our choice for $K$, the functional under consideration takes the form
		\[
		F_{u_0}(u)(t,x) = u_0(x) + \frac{1}{\mathsf c_2 \ep^2} \int_{0}^{t} \int_{\B_{\sqrt{\ep}}(x)\, \cap\, \bS^2} J\left(u(s,y)- u(s,x)\right) d \sigma(y) ds.
		\]

The goal is to bound the $C^1$ norm of $F_{u_0}(u) -  F_{u_0}(v)$ for $u, v \in \mathcal X_T$.
For smooth $f$, the norm on the derivatives coincides with the Lipschitz norm, so we use formula \eqref{alter-lip} with $f= F_{u_0}(u) -  F_{u_0}(v)$. Without loss of generality we can restrict the supremum in $\phi$ in $\eqref{alter-lip}$ to those $\phi$ with sufficiently small rotating angle such that $\B_{\sqrt{\ep}}(\phi(\tilde x))\cap\bS^2$ and $\B_{\sqrt{\ep}}(\tilde x)\cap\bS^2$ have non-empty intersection for any $\tilde x\in E(\phi)$. We denote $w=u-v$ and for such a rotation $\phi$ and any $\tilde x \in E(\phi)$ we have,

\begin{align*}\label{Lip.Indic}
\mathsf c_2 &\ep^2(f(t, \phi(\tilde x)) - f(t, \tilde x)) \\
& =\int_{0}^{t} \int_{\B_{\sqrt{\ep}}(\phi(\tilde x))\, \cap\, \bS^2} J\left(w(s,y)- w(s,\phi(\tilde x))\right) d \sigma(y) ds -   \int_{0}^{t} \int_{\B_{\sqrt{\ep}}(\tilde x)\, \cap\, \bS^2} J\left(w(s,y)- w(s,\tilde x)\right) d \sigma(y) ds\\
& =\Big(\int_{0}^{t} \int_{\B_{\sqrt{\ep}}(\phi(\tilde x))\, \cap\, \bS^2} J\left(w(s,y)- w(s,\tilde x)\right) d \sigma(y) ds -   \int_{0}^{t} \int_{\B_{\sqrt{\ep}}(\tilde x)\, \cap\, \bS^2} J\left(w(s,y)- w(s,\tilde x)\right) d \sigma(y) ds\Big)\\
& \hspace{1cm} + \int_{0}^{t} \int_{\B_{\sqrt{\ep}}(\phi(\tilde x))\,\cap\, \bS^2} \Big(J\left(w(s,y)- w(s,\phi(\tilde x))\right) -J\left(w(s,y)- w(s,\tilde x)\right) \Big)d \sigma(y) ds.
\end{align*}
Then, using $J(0)=0, |J'|\le 1$ and the mean-value theorem, we have for some finite constant $\mathsf c$ and any $t\in[0,T]$,
\begin{align*}
&\mathsf c_2 \ep^2|f(t, \phi(\tilde x)) - f(t, \tilde x)|\\
&\hspace{1cm}\le \int_0^t\sup_{y \in \mathbb S^2} \left|w(s,y)- w(s,\tilde x)\right| 2\sigma\Big(\big(\B_{\sqrt{\ep}}(\phi(\tilde x))\cap\bS^2\big)\setminus \big(\B_{\sqrt{\ep}}(\tilde x)\cap  \bS^2 \big) \Big)ds \\
&\hspace{3cm}+ \int_0^t|w(s,\phi(\tilde x))-w(s,\tilde x)| \sigma\big(\B_{\sqrt{\ep}}(\phi(\tilde x))\cap \bS^2\big)ds \\
&\hspace{1cm}\le \int_0^t2\|w\|_{L^\infty} \mathsf c \sqrt{\ep} \|\phi(\tilde x)-\tilde x\| ds+\int_0^t\|w(s, \cdot)\|_{C^1}\|\phi(\tilde x)-\tilde x\|\mathsf c T \ep \, ds \\
& \hspace{1cm}\le  \mathsf c  \sqrt{\ep}  \int_0^t\|w(s,\cdot)\|_{C^1}ds \, \|\phi(\tilde x)-\tilde x\|\\
&\hspace{1cm}\le  \mathsf c T \sqrt{\ep} \|w\|_{\cX_T} \|\phi(\tilde x)-\tilde x\|.
\end{align*}
We have obtainede
\[
\frac{|f(t,\phi(\tilde x)) - f(t,\tilde x)|}{\|\phi(\tilde x)-\tilde x\|} \le \frac{\mathsf c T}{\mathsf c_2 \ep^{3/2}}  \|u- v\|_{\mathcal X_T}.
\]
Hence, for $T$ sufficiently small
\[
\|F_{u_0}(u) -  F_{u_0}(v)\|_{\mathcal{X}_{T}} \le \nu \|u -  v\|_{\mathcal{X}_{T}}, \qquad (0< \nu<1),
\]
for all $u,v \in \mathcal{X}_{T}$. This proves that $F_{u_0}$ is a contraction in $\mathcal X_T$ and the result follows.

\centerline{{\bf Acknowledgments}}
We thank Shuyang Ling for enlightening discussions and pointers to the recent literature. Pablo Groisman and Cecilia De Vita are partially supported by CONICET Grant PIP 2021 11220200102825CO, UBACyT Grant 20020190100293BA and PICT 2021-00113 from Agencia I+D. RH is funded by the Deutsche Forschungsgemeinschaft (DFG, German Research Foundation) under Germany’s Excellence Strategy EXC 2044-390685587, Mathematics M\"unster: Dynamics-Geometry-Structure.
    
\bibliographystyle{abbrv}
\bibliography{kuramotoSL}

\end{document}